\newcommand{\e}{\varepsilon}
\renewcommand{\k}{\varkappa}
\newcommand{\f}{{\varphi}}
\DeclareMathOperator{\iindex}{index}
\DeclareMathOperator{\sign}{sign}
\DeclareMathOperator{\dist}{dist}
\DeclareMathOperator{\supp}{supp}
\DeclareMathOperator{\Ran}{Ran}
\DeclareMathOperator{\Dom}{Dom}
\DeclareMathOperator{\Ker}{Ker}
\DeclareMathOperator{\Tr}{Tr}
\DeclareMathOperator{\tr}{tr}
\renewcommand\Im{\hbox{{\rm Im}}\,}
\newcommand{\abs}[1]{\lvert#1\rvert}
\newcommand{\norm}[1]{\lVert#1\rVert}
\newcommand{\Norm}[1]{\left\lVert#1\right\rVert}
\newcommand{\wt}{\widetilde}
\newcommand{\R}{{\mathbb R}}
\newcommand{\Z}{{\mathbb Z}}
\newcommand{\C}{{\mathbb C}}
\renewcommand{\H}{{\mathcal H}}
\numberwithin{equation}{section}
\newtheorem{theorem}{Theorem}[section]
\newtheorem{lemma}[theorem]{Lemma}
\newtheorem{corollary}[theorem]{Corollary}
\newtheorem{proposition}[theorem]{Proposition}
\theoremstyle{definition}
\theoremstyle{remark}
\newtheorem{remark}[theorem]{Remark}
\numberwithin{equation}{section}
\begin{document}

\title[Spectral flow, Fredholm index, and spectral shift]{The spectral flow, the Fredholm index, and the spectral shift function}

\author{Alexander Pushnitski}

\address{Department of Mathematics, King's College, London. Strand, London, WC2R 2LS, U.K.}
\email{alexander.pushnitski@kcl.ac.uk}

\subjclass[2000]{Primary 47A53; Secondary 47A55}

\dedicatory{Dedicated to M.~Sh.~Birman on the occasion of his 80th birthday}

\begin{abstract}
We discuss the well known ``Fredholm index=spectral flow'' theorem and show that 
it can be interpreted as a limit case of an identity involving two spectral shift functions.
\end{abstract}

\maketitle

%%%%%%%%%%%%%%%%%%%%%%%%%%%%%
\section{Introduction}
%%%%%%%%%%%%%%%%%%%%%%%%%%%%%

\subsection{Background}
Let $A(t)$, $t\in\R$, be a family of self-adjoint operators in a separable Hilbert 
space $\H$ such that the limits 
\begin{equation}
A^\pm=\lim_{t\to\pm\infty} A(t)
\label{a1}
\end{equation}
exist in an appropriate sense. 
In the Hilbert space $L^2(\R;\H)$, consider the operator 
\begin{equation}
D_A=\frac{d}{dt}+A(t),
\text{ i.e. } (D_A u)(t)=\frac{du(t)}{dt}+A(t)u(t).
\label{a2}
\end{equation}
It is well known  (see e.g. \cite{Callias,RobbinS} and references to earlier work therein)
that, under the appropriate assumptions on $A(t)$, the Fredholm index of the operator $D_A$ 
equals the spectral flow of the family $\{A(t)\}_{t\in\R}$
through zero. The spectral flow through zero should be understood as the number of 
eigenvalues of $A(t)$ (counting multiplicities) that cross zero from left to right minus 
the number of eigenvalues
of $A(t)$ that cross zero from right to left as $t$ grows from $-\infty$ to $+\infty$.
The ``Fredholm index = spectral flow through zero'' theorem is one of the large family of 
index theorems; see e.g. \cite{Callias} for discussion.

The ``Fredholm index = spectral flow through zero'' theorem is usually considered  
under the assumption that the spectra of the operators $A(t)$ are discrete, 
at least on some interval containing zero.
The purpose of this note is to show that this assumption can be lifted at the expense of the trace class
assumption
\begin{equation}
\int_{-\infty}^\infty \Norm{A'(t)}_{S_1} dt<\infty,
\quad A'(t)\equiv  \frac{d A(t)}{dt},
\label{a3}
\end{equation}
where $\norm{\cdot}_{S_1}$ is the trace norm in $\H$, $\norm{A}_{S_1}=\sqrt{\tr(A^*A)}$.
Assumption \eqref{a3} ensures that the difference $A^+-A^-$ is a trace class operator,
which allows one to use the notion of  \emph{M.~G.~Krein's spectral shift function}
for the pair $A^+$, $A^-$. 
The point we would like to make is  that the ``Fredholm index = spectral flow'' 
theorem can be understood as a particular limiting case of a fairly general  identity (see \eqref{a11} below) 
involving two spectral shift functions.
This identity might be interesting in its own right.

It is a pleasure to dedicate this note to M.~Sh.~Birman, 
who has taught me (among many other useful things) 
to think of the  spectral shift function whenever two self-adjoint operators are involved.

\subsection{Notation} We denote by $S_1$ and $S_2$ the trace class and the Hilbert-Schmidt class, 
with the norms $\norm{\cdot}_{S_1}$ and $\norm{\cdot}_{S_2}$. 
For a self-adjoint operator $A$ and an interval $\delta\subset \R$, we denote by $E_A(\delta)$ 
the spectral projection of $A$ corresponding to $\delta$. We denote by $N_A(\delta)=\Tr E_A(\delta)$
the total number of eigenvalues (counting multiplicities) of $A$ in the interval $\delta$. 
For self-adjoint semi-bounded from below operators $A$ and $B$, the inequality $A\leq B$ is 
understood in the quadratic form sense, i.e. for all sufficiently large $a>0$, 
$\Dom((A+a)^{1/2})\supset \Dom((B+a)^{1/2})$ and for all $f\in  \Dom((B+a)^{1/2})$, 
$\norm{(A+a)^{1/2}f}\leq \norm{(B+a)^{1/2}f}$.

\subsection{The spectral shift function}
Here we recall the necessary facts from the spectral shift function theory. 
See the original paper \cite{Krein} or a survey \cite{BYa} 
or a book \cite{Yafaev} for the details.

Let $H$ and $\wt H$ be self-adjoint operators in a Hilbert space. The simplest situation in 
which the spectral shift function can be defined is when the difference
$\wt H-H$ belongs to the trace class $S_1$. Then there exists a unique function $\xi(\cdot; \wt H,H)\in L^1(\R)$ 
such that the Lifshits-Krein trace formula
\begin{equation}
\Tr(f(\wt H)-f(H))=\int_{-\infty}^\infty \xi(\lambda; \wt H,H)f'(\lambda)d\lambda
\label{a6}
\end{equation}
holds true for every $f\in C_0^\infty(\R)$. The function
$\xi(\cdot; \wt H,H)$ is called the spectral shift function for the pair $\wt H$, $H$. 
In fact, the class of admissible functions $f$  in \eqref{a6} is much wider than $C_0^\infty(\R)$; see 
\cite{BYa,Yafaev} for the details and references to the literature.

The assumption $\wt H-H\in S_1$ is very restrictive in applications. 
Suppose instead that $\wt H$ and $H$ are non-negative (in the quadratic form sense) 
self-adjoint operators such that 
\begin{equation}
(\wt H-z)^{-1}-(H-z)^{-1}\in S_1
\label{a12}
\end{equation}
for some (and hence for all) $z\in \C\setminus[0,\infty)$. Then there exists a unique function 
$\xi(\cdot;\wt H,H)\in L^1(\R,(1+\lambda^2)^{-1}d\lambda)$, $\supp \xi\in[0,\infty)$ such that 
the trace formula \eqref{a6} holds true for all $f\in C_0^\infty(\R)$.

Next, assuming either $\wt H-H\in S_1$  or \eqref{a12} holds true,
suppose that for some (possibly semi-infinite) open interval $\Delta\subset\R$ 
we have $\sigma_{ess}(H)\cap \Delta=\emptyset$. 
By Weyl's theorem on the invariance of 
the essential spectrum with respect to compact perturbations, 
we also have $\sigma_{ess}(\wt H) \cap\Delta=\emptyset$.   
Then, as it is not difficult to see from \eqref{a6}, for any  
$a,b\in\Delta\setminus(\sigma(H)\cup\sigma(\wt H))$, $a<b$, we have 
\begin{equation}
\xi(b;\wt H,H)-\xi(a,\wt H,H)=N_H(a,b)-N_{\wt H}(a,b),
\label{a7}
\end{equation}
where $N_H(a,b)$ is the number of eigenvalues (counting multiplicities) of $H$ in $(a,b)$. 
Formula \eqref{a7} remains true in the case $a=-\infty$ (or $b=\infty$).

\subsection{ The spectral shift function and the spectral flow}
Let $H(\alpha)$, $\alpha\in[0,1]$, be a family of self-adjoint operators such that
the operators $H(\alpha)-H(0)$ belong to the trace class and depend continuously on $\alpha$ 
in the trace norm. Then the function $\xi(\cdot; H(\alpha),H(0))$ is well defined and continuous in $\alpha$ 
as an element of $L^1(\R)$. 

As noted above, by Weyl's theorem $\sigma_{ess}(H(\alpha))$ is independent of $\alpha\in[0,1]$. 
Suppose that $\sigma_{ess}(H(\alpha))\cap \Delta=\emptyset$ for some interval $\Delta\subset\R$.
Then we claim that for any $\lambda\in\Delta\setminus(\sigma(H(1))\cup \sigma(H(0)))$, the spectral shift function
$\xi(\lambda; H(1),H(0))$ equals the spectral flow of the family $H(\alpha)$ through $\lambda$ as $\alpha$ grows
from $0$ to $1$: 
\begin{multline}
\xi(\lambda; H(1),H(0))
=
\langle \text{the number of eigenvalues of $H(\alpha)$ which cross $\lambda$ rightwards}\rangle
\\
-
\langle \text{the number of eigenvalues of $H(\alpha)$ which cross $\lambda$ leftwards}\rangle
\label{a8}
\end{multline}
as long as the r.h.s. is finite. 

In order to justify \eqref{a8}, first suppose that there exists $\lambda_0<\lambda$ such that 
\begin{equation}
\lambda_0\in\Delta\setminus \left(\cup_{\alpha\in[0,1]} \sigma(H(\alpha))\right).
\label{a9}
\end{equation}
Then it is not difficult to check that $\xi(\lambda_0;H(\alpha),H(0))=0$ for all $\alpha$
and so \eqref{a7} (with $a=\lambda_0$, $b=\lambda$) yields
\begin{equation}
\xi(\lambda; H(\alpha),H(0))=N_{H(0)}(\lambda_0,\lambda)-N_{H(\alpha)}(\lambda_0,\lambda).
\label{a10}
\end{equation}
Considering the r.h.s. of \eqref{a10} as a function of $\alpha$, it is easy to see that
$$
\langle \text{r.h.s. of \eqref{a10} with $\alpha=1$ }\rangle=\langle\text{r.h.s. of \eqref{a8}} \rangle
$$
whenever  the r.h.s. of \eqref{a8} is well defined. Thus, \eqref{a8} holds true. 

In general, $\lambda_0$ as in \eqref{a9} may not exist, but we can always split $[0,1]$ into sufficiently 
small subintervals $\delta_i$ such that for each family $\{H(\alpha)\mid \alpha\in \delta_i\}$, 
$\lambda_0$ can be chosen appropriately. Then formula \eqref{a8} can be obtained by adding up 
the formulas corresponding to all the subintervals.

\subsection{ Main result}
It will be convenient to write $A(t)=A^-+B(t)$, where $A^-$ is an arbitrary
self-adjoint operator in $\H$ and $B(t)$ is a family of trace class operators 
such that the derivative $B'(t)=\frac{dB(t)}{dt}$ exists in the trace norm and 
\begin{equation}
\int_{-\infty}^\infty \Norm{B'(t)}_{S_1} dt<\infty.
\label{a13}
\end{equation}
This assumption ensures that the limits $B^\pm=\lim_{t\to\pm\infty} B(t)$ 
exist in trace norm. We assume that $B^-=0$ (of course, this is merely
a normalization condition) and  define $A^+=A^-+B^+$. 
According to this definition, 
for all $t\in\R$ the operators $A(t)$ have the same domain $\Dom(A(t))=\Dom(A^-)$. 

Consider the operator $D_A$ (see \eqref{a2}) in the Hilbert space $L^2(\R,\H)$ with the
domain $\Dom(D_A)$ consisting of all 
$u$ from the Sobolev space $W^1_2(\R,\H)$ such that 
$$
u(t)\in \Dom(A^-) \text{ for all $t$ and }
\int_{-\infty}^\infty (\norm{u'(t)}^2+\norm{A^-u(t)}^2)dt<\infty.
$$
The operator $D_A$ is closed. This can be seen as follows.
Let us write $D_A$ as a sum $D_A=D_{A^-}+B$.
The operator $B$ of multiplication by $B(t)$ is bounded in $L^2(\R;\H)$,
so the question reduces to the closedness of $D_{A^-}$.
Note that $D_{A^-}=i(\frac1i \frac{d}{dt})+A^-$, and the 
self-adjoint operators $(\frac1i \frac{d}{dt})$ and $A^-$ in $L^2(\R;\H)$
commute. 
Using the spectral representations of
$(\frac1i \frac{d}{dt})$ and $A^-$, it is easy to see 
that $D_{A^-}$ is closed on the domain $\Dom(D_{A^-})=\Dom(D_{A})$.

The same argument shows that the adjoint operator $D_A^*$ is defined  as a closed operator 
on the same domain as $D_A$.
Below we consider the self-adjoint operators 
$$
H=D_A^* D_A\quad \text{ and }\quad \wt H=D_A D_A^*.
$$
As the difference $A^+-A^-$ is a trace class operator, we can consider
the spectral shift function $\xi(\lambda;A^+,A^-)$, $\lambda\in\R$.
The first part of the Theorem below shows that 
the spectral shift function $\xi(\lambda;\wt H,H)$ is also well defined. The following Theorem 
relates these two spectral shift functions. 
%%%%%%%%%%%%%%%%%%%
\begin{theorem}\label{thm.a1}
%%%%%%%%%%%%%%%%%%%
Assume \eqref{a3} and let $H$, $\wt H$ be as defined above.
For any $z\in\C\setminus[0,\infty)$, the difference
$(\wt H-z)^{-1}-(H-z)^{-1}$ belongs to the trace class. 
For a.e. $\lambda>0$, we have the identity 
\begin{equation}
\xi(\lambda;\wt H,H)=\frac1\pi\int_{-\sqrt{\lambda}}^{\sqrt{\lambda}}\xi(s;A^+,A^-)\frac{ds}{\sqrt{\lambda-s^2}},
\label{a11}
\end{equation}
where the integral in the r.h.s. converges absolutely.
\end{theorem}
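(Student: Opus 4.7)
The proof has two components. For the trace-class claim on resolvent differences, the key algebraic observation, obtained directly from $D_A = \tfrac{d}{dt} + A(t)$ and $D_A^* = -\tfrac{d}{dt} + A(t)$, is
\begin{equation*}
\wt H - H \;=\; D_A D_A^* - D_A^* D_A \;=\; 2A'(t),
\end{equation*}
interpreted as multiplication in $L^2(\R;\H)$ by the $S_1$-valued function $t \mapsto 2A'(t)$. The second resolvent identity then reduces the trace-class claim to trace-classness of $(\wt H - z)^{-1} A'(t) (H - z)^{-1}$. Using a polar decomposition $A'(t) = |A'(t)|^{1/2} U_t |A'(t)|^{1/2}$, this in turn reduces to Hilbert-Schmidt bounds on $|A'(t)|^{1/2}(H - z)^{-1}$ and $|A'(t)|^{1/2}(\wt H - z)^{-1}$, which follow from a standard kernel estimate for $(-\partial_t^2 + W(t) - z)^{-1}$ on $L^2(\R;\H)$ combined with the hypothesis $\int \Norm{A'(t)}_{S_1}\, dt < \infty$.

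\textbf{Reformulation of \eqref{a11}.} Applying the Krein trace formula with $f(\lambda) = e^{-t\lambda}$ ($t>0$) to both sides of \eqref{a11}, and using the elementary Laplace integral $\int_{s^2}^\infty e^{-t\lambda}(\lambda - s^2)^{-1/2}\,d\lambda = \sqrt{\pi/t}\,e^{-ts^2}$, identity \eqref{a11} is equivalent, by injectivity of the Laplace transform, to the heat-semigroup trace identity
\begin{equation*}
\Tr\bigl(e^{-tH} - e^{-t\wt H}\bigr) \;=\; \sqrt{t/\pi}\int_{-\infty}^\infty \xi(s;A^+,A^-)\,e^{-ts^2}\,ds, \qquad t>0.
\end{equation*}
By Duhamel's formula combined with $\wt H - H = 2A'(t)$, the left-hand side also equals $2\int_0^t \Tr\bigl(e^{-(t-s)\wt H}\,A'(\tau)\,e^{-sH}\bigr)\,ds$, so both sides are now explicit, manifestly absolutely convergent trace integrals.

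\textbf{Proof of the trace identity; main obstacle.} To match the two sides, I would use the homotopy $A^\alpha(t) = A^- + \alpha B(t)$, $\alpha\in[0,1]$; at $\alpha = 0$ one has $H^0 = \wt H^0 = -\partial_t^2 + (A^-)^2$ and $(A^0)^+ = A^-$, so both sides vanish, and it suffices to match the $\alpha$-derivatives, a first-order variational statement that can be extracted from the Duhamel expression above. The main obstacle is careful trace-class bookkeeping: the Duhamel expansion and the cyclic trace rearrangements involve operators (multiplication by $B(t)$, resolvents of $H^\alpha$) that are only \emph{jointly} trace class, so product orders must be chosen so that the perturbation $A'(t)\in S_1$ is always present to supply trace-class integrability. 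A robust fallback, which I would pursue in parallel, is to first verify the identity in the scalar case $\H = \C$, where $H$ and $\wt H$ are the classical 1D supersymmetric partner Schr\"odinger operators $-\partial_t^2 + a^2 \mp a'$ and the trace identity reduces to a known scattering-theoretic computation; this extends to the commutative case $[A^-, B(t_1)] = [B(t_1), B(t_2)] = 0$ by simultaneous spectral diagonalization, and then to the general case by a density argument in the norm $\int \Norm{B'(t)}_{S_1}\,dt$, using continuity of both sides of the heat trace identity in that norm.
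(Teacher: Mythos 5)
Your trace-class argument (via $\wt H - H = 2A'(\cdot)$ and factorizing $A'$ through Hilbert--Schmidt pieces) and your heat-kernel reformulation of \eqref{a11} are both sound; the latter is an acceptable substitute for the paper's route, which applies the Lifshits--Krein formula to the resolvent identity \eqref{a4}, integrates in $z$, and passes to the boundary $z\to\lambda+i0$ by Stieltjes inversion. However, the theorem's real content is the trace identity itself (the heat-trace analogue of Proposition~\ref{p1}), and neither of your two routes to it actually closes.

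In the homotopy route, differentiating $A^\alpha = A^- + \alpha B$ gives $\partial_\alpha D_{A^\alpha}=B$, hence $\partial_\alpha H^\alpha = D_{A^\alpha}^*B + BD_{A^\alpha}$. This involves multiplication by $B(t)$, not by $B'(t)$: as a multiplication operator on $L^2(\R;\H)$, $B$ is bounded but neither trace class nor even compact, so the rule that ``$A'\in S_1$ is always present'' is precisely \emph{not} obeyed by the $\alpha$-derivative. Worse, the formal derivative of $\Tr\bigl(e^{-\tau H^\alpha}-e^{-\tau\wt H^\alpha}\bigr)$, after the intertwining $e^{-\tau H}D^* = D^*e^{-\tau\wt H}$ and cyclicity, collapses to a sum of commutator traces that look as though they vanish; the correct nonzero answer is exactly the anomaly of that naive computation, and your plan offers no mechanism for regularizing it. This is not a bookkeeping issue but the substance of the proof. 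Your fallback is also blocked at its last step: families with $[A^-,B(t_1)]=[B(t_1),B(t_2)]=0$ for all $t_1,t_2$ are not dense in the norm $\int\norm{B'(t)}_{S_1}\,dt$, so noncommutativity cannot be removed by approximation. The paper instead proves \eqref{a4} directly for $\dim\H<\infty$ with compactly supported $A'$, by constructing the resolvent kernels of $H$ and $\wt H$ from matrix-valued Jost solutions and Wronskians (following Mart\'inez Alonso--Olmedilla) and evaluating the diagonal trace integral by parts --- this Wronskian computation is where the anomalous nonzero contribution is actually captured --- and then passes to general $\H$ by finite-rank projections $P_n\to I$ together with stability lemmas for both sides of \eqref{a4}, not by any commutativity reduction.
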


\begin{corollary}\label{cr.a3}
Suppose that $0$ is not in the spectrum of $A^+$ or $A^-$.
Then the operator $D_A$ is Fredholm and
\begin{equation}
\iindex D_A
=
\dim \Ker D_A-\dim \Ker D_A^*
=
\xi(0;A^+,A^-).
\label{a5}
\end{equation}
\end{corollary}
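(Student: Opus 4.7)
The plan is to apply Theorem \ref{thm.a1} just above $\lambda=0$ and compare the result with the basic identity \eqref{a7}. First I would establish that $D_A$ is Fredholm. Under the hypothesis $0\notin\sigma(A^\pm)$, Fourier decomposition in $t$ together with the spectral theorem for $A^\pm$ shows that the constant-coefficient operators $D_{A^\pm}$ are boundedly invertible on $L^2(\R;\H)$; equivalently, $D_{A^\pm}^*D_{A^\pm}\geq\dist(0,\sigma(A^\pm))^2>0$. A standard parametrix construction --- gluing $D_{A^-}^{-1}$ on $\{t<-T\}$ to $D_{A^+}^{-1}$ on $\{t>T\}$ via a partition of unity in $t$, with the error controlled by \eqref{a13} --- then shows that $0$ belongs neither to $\sigma_{ess}(H)$ nor to $\sigma_{ess}(\wt H)$, so $D_A$ is Fredholm with $\iindex D_A=\dim\Ker H-\dim\Ker\wt H$.

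Next, since $H,\wt H\geq 0$ with $0$ isolated in both spectra, pick $\lambda_0>0$ so that $(0,\lambda_0)\cap(\sigma(H)\cup\sigma(\wt H))=\emptyset$. Because $\supp\xi(\cdot;\wt H,H)\subset[0,\infty)$, applying \eqref{a7} to the pair $(\wt H,H)$ on the interval $(-1,\lambda)$ yields, for every $\lambda\in(0,\lambda_0)$,
\[
\xi(\lambda;\wt H,H)=N_H(-1,\lambda)-N_{\wt H}(-1,\lambda)=\dim\Ker H-\dim\Ker\wt H=\iindex D_A.
\]

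Finally, in the right-hand side of \eqref{a11} I would substitute $s=\sqrt\lambda\,\sin\theta$ to obtain
\[
\xi(\lambda;\wt H,H)=\frac1\pi\int_{-\pi/2}^{\pi/2}\xi(\sqrt\lambda\,\sin\theta;A^+,A^-)\,d\theta.
\]
Since $0\notin\sigma(A^\pm)$, the pair $(A^+,A^-)$ has a common spectral gap around $0$, and \eqref{a7} applied inside this gap shows that $\xi(\cdot;A^+,A^-)$ is constant near $0$; denote its value there by $\xi(0;A^+,A^-)$. For $\lambda>0$ small enough the integrand is therefore identically $\xi(0;A^+,A^-)$, so the integral equals $\xi(0;A^+,A^-)$. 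Matching with the previous display gives $\iindex D_A=\xi(0;A^+,A^-)$, which is \eqref{a5}.

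The main obstacle is the Fredholmness step. The perturbation $D_A-D_{A^-}$ is multiplication by $B(t)$, which is bounded but not compact on $L^2(\R;\H)$, since $B(t)\to B^+\neq 0$ as $t\to+\infty$, so Weyl's theorem cannot be applied directly to $H$ and $D_{A^-}^*D_{A^-}$. One must instead build a two-sided parametrix for $D_A$ from $D_{A^\pm}^{-1}$, with the error estimated by the trace-class integrability \eqref{a13}. Once Fredholmness is in place, the remaining computation is a clean consequence of Theorem \ref{thm.a1} and \eqref{a7}.
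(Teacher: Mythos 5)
Your steps 2 and 3 are essentially what the paper does (with the minor difference that you shrink the interval to $(0,\lambda_0)$ to avoid invoking \cite[Lemma~A.3]{AvronSeilerSimon}, which the paper uses to get $\sigma(H)\setminus\{0\}=\sigma(\wt H)\setminus\{0\}$ on the larger range $\lambda\in(0,a)$; both routes are fine, and your observation that $\xi(\cdot;A^+,A^-)$ is locally constant near $0$ and that the $\arcsin$-weight integrates to $1$ is exactly the paper's final calculation).

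The genuine gap is in the Fredholmness step, which you correctly flag as the main obstacle, but the parametrix you sketch does not close. Setting $P=\chi_-D_{A^-}^{-1}+\chi_+D_{A^+}^{-1}$ with $\chi_-+\chi_+=1$ produces the error
\[
D_AP-I=\chi_+'\bigl(D_{A^+}^{-1}-D_{A^-}^{-1}\bigr)+\chi_-\,B(t)D_{A^-}^{-1}+\chi_+\,(B(t)-B^+)D_{A^+}^{-1}.
\]
The first term is indeed trace class (the two $\chi_\pm'D_{A^\pm}^{-1}$ are each non-compact when $\dim\H=\infty$ --- e.g.\ for $A^-=\lambda_0 I_\H$, $\chi_-'D_{A^-}^{-1}$ is a Hilbert--Schmidt operator on $L^2(\R)$ tensored with $I_\H$ --- but their sum is $\chi_+'D_{A^+}^{-1}(A^--A^+)D_{A^-}^{-1}$, and $A^+-A^-\in S_1$). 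The remaining two terms, however, are \emph{not} controlled by \eqref{a13} in the way you indicate: multiplication by an operator-valued function $V(t)$ that is trace class for each $t$ and has $\norm{V(t)}_{S_1}\to0$ at both ends is not a compact operator on $L^2(\R;\H)$, and the $t$-smoothing gained from $D_{A^\pm}^{-1}$ (one derivative, which by comparison with $H_0$ is morally $R_0(z)^{1/2}$) turns this into compactness only if something like $\int\norm{V(t)}_{S_1}\,dt<\infty$ holds, as in Lemma~\ref{lma.b5}. But \eqref{a13} controls $\int\norm{B'(t)}_{S_1}\,dt$, not $\int\norm{B(t)}_{S_1}\,dt$; the latter can diverge under \eqref{a13} (take $\norm{B(t)}_{S_1}\sim 1/\log|t|$ as $t\to-\infty$), so $\int\norm{\chi_-(t)B(t)}_{S_1}\,dt$ need not be finite. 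Thus compactness of the parametrix error does not follow from \eqref{a13}, and the sketch leaves the essential point open.

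The paper sidesteps this by arguing at the level of the nonnegative operator $H=D_A^*D_A=H_0+A(t)^2-B'(t)$. Since $0\in\rho(A^\pm)$ one can choose $a>0$ with $A(t)^2\geq aI$ for all large $\abs{t}$, and then
\[
H\geq H_0+aI+V,\qquad V(t)=-B'(t)+(A(t)^2-aI)E_{A(t)^2}([0,a]),
\]
where the second summand of $V(t)$ is finite rank and vanishes for $\abs{t}$ large, so that $V$ satisfies the hypothesis $\int\norm{V(t)}_{S_1}\,dt<\infty$ of Lemma~\ref{lma.b5}. That lemma makes $V$ relatively form compact with respect to $H_0$, so $\sigma_{ess}(H_0+aI+V)=[a,\infty)$, and the form inequality together with the min--max principle gives $\inf\sigma_{ess}(H)\geq a>0$; the same for $\wt H$. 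Fredholmness of $D_A$ and the identity $\iindex D_A=\dim\Ker H-\dim\Ker\wt H$ then come from \cite[Theorem~A.4]{AvronSeilerSimon}. Note that here it is $B'$, not $B$, that is required to be integrable, which is precisely \eqref{a13}.
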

Generally speaking, the spectral shift function $\xi(\lambda;A^+,A^-)$
is defined as an element of $L^1(\R)$, so it does not 
make sense to speak of its value at a fixed point $\lambda=0$.
However, the assumption of Corollary~\ref{cr.a3} 
implies that $\xi(\lambda;A^+,A^-)$ is constant near $\lambda=0$, 
and so $\xi(0;A^+,A^-)$ is well defined.

According to \eqref{a8}, the r.h.s. of \eqref{a5} coincides with the 
spectral flow of the family $A(t)$ through zero as long as the spectral flow is well defined. 
Thus, Corollary~\ref{cr.a3} can be interpreted as the generalisation of the  
``Fredholm index=spectral flow'' theorem.

Corollary~\ref{cr.a3},  under various sets of conditions on $A(t)$, is well known; see, e.g. 
\cite{RobbinS,Callias,BGS} and references to earlier works therein.

\subsection{ The strategy of the proof }
The proof of Theorem~\ref{thm.a1} is based on an identity due to \cite[(3.14)]{Callias} which we state
below as Proposition~\ref{p1}.  
In order to state this identity, let us fix the principal branch of the square root in $\C\setminus(-\infty,0]$. 
For any $z\in\C\setminus[0,\infty)$ and any $s\in\R$, we denote 
$$
g_z(s)=\frac{s}{\sqrt{s^2-z}}.
$$
The formula below involves traces of operators in $\H$
and in $L^2(\R;\H)$.
We denote by $\tr$ the trace in $\H$ and by $\Tr$ the trace in $L^2(\R;\H)$.
%%%%%%%%%%%%%%%%%%%
\begin{proposition}\label{p1}
%%%%%%%%%%%%%%%%%%%
Assume \eqref{a3} and let $\wt H$, $H$ be as defined above. Then for any 
 $z\in\C\setminus[0,\infty)$, the difference $g_z(A^+)-g_z(A^-)$ belongs to the 
 trace class in $\H$ and 
\begin{equation}
\Tr((\wt H-z)^{-1}-(H-z)^{-1})
=
\frac1{2z}\tr(g_z(A^+)-g_z(A^-)).
\label{a4}
\end{equation}
\end{proposition}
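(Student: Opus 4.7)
The plan is to establish \eqref{a4} in two stages, with the essential difficulty arising in the second.

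First, I would establish the trace-class properties. Direct computation yields $\wt H - H = D_A D_A^* - D_A^* D_A = 2A'(t)$, viewed as multiplication in $L^2(\R;\H)$ by the pointwise trace-class operator $A'(t)$. The second resolvent identity then gives
$$(\wt H - z)^{-1} - (H-z)^{-1} = -2(H-z)^{-1} A'(t) (\wt H-z)^{-1}.$$
Writing the polar decomposition $A'(t) = |A'(t)|^{1/2}\omega(t)|A'(t)|^{1/2}$ and using pointwise kernel bounds for the resolvents of $H$ and $\wt H$, one estimates the two factors $(H-z)^{-1}|A'(t)|^{1/2}$ and $|A'(t)|^{1/2}(\wt H-z)^{-1}$ in Hilbert-Schmidt norm, obtaining a bound of the form $\Norm{(\wt H-z)^{-1} - (H-z)^{-1}}_{S_1} \leq C(z)\int\Norm{A'(t)}_{S_1}\,dt$ from \eqref{a3}. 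On $\H$, the operator $g_z(A^+) - g_z(A^-)$ is trace class because $A^+ - A^-\in S_1$ and $g_z$ is smooth, by the Birman-Solomyak theorem.

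Second, I would rewrite both sides of \eqref{a4} in a common "local" form. On the $\H$ side, the Daletskii-Krein formula for the derivative of a function of a self-adjoint operator gives, after tracing,
$$\tr\bigl(g_z(A^+) - g_z(A^-)\bigr) = \int_{-\infty}^{\infty}\tr\bigl[g_z'(A(t))\,A'(t)\bigr]\,dt,$$
and since $g_z'(s) = -z(s^2-z)^{-3/2}$, the right-hand side of \eqref{a4} equals $-\tfrac12\int\tr[A'(t)(A(t)^2-z)^{-3/2}]\,dt$. On the $L^2(\R;\H)$ side, cyclicity of the trace (justified by Step~1) gives
$$\Tr\bigl[(\wt H-z)^{-1} - (H-z)^{-1}\bigr] = -2\int_{-\infty}^{\infty}\tr_\H\bigl[A'(t)\,K(t,t;z)\bigr]\,dt,$$
where $K(t,t';z)$ denotes the operator-valued kernel of $(\wt H-z)^{-1}(H-z)^{-1}$. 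For constant $A(t)=A_0$ a direct Fourier calculation gives $K(t,t;z) = \tfrac14(A_0^2-z)^{-3/2}$, so \eqref{a4} reduces to the claim that this equality persists when $A(t)$ varies, in an averaged sense against $A'(t)$.

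This last step is the main obstacle: the deviation of $K(t,t;z)$ from $\tfrac14(A(t)^2-z)^{-3/2}$ is not pointwise zero when $A$ varies, yet it must integrate to zero against $A'(t)$. This is a manifestation of the supersymmetric structure $\wt H = D_A D_A^*$, $H = D_A^* D_A$. I would address it by a homotopy argument: set $A_\tau(t) = A^- + \tau B(t)$ for $\tau\in[0,1]$, so that both sides of \eqref{a4} vanish at $\tau=0$, and verify that their $\tau$-derivatives agree. Those derivatives are expressible as traces of products of $B$, $B'$, $D_{A_\tau}$, $D_{A_\tau}^*$, and the resolvents, and their equality should follow by repeatedly invoking the intertwining $D_{A_\tau}(H_\tau-z)^{-1} = (\wt H_\tau-z)^{-1}D_{A_\tau}$ together with cyclic permutations in the trace. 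The delicate point is that these cyclic manipulations must respect the unboundedness of $D_{A_\tau}$: intermediate products such as $D_{A_\tau}(H_\tau-z)^{-1}D_{A_\tau}^*$ are not individually trace class, so one must eliminate the unbounded factors using intertwining before taking the trace. A possibly cleaner alternative is to prove \eqref{a4} first for scalar $A(t)$ ($\dim\H = 1$), where the Jost-function formalism for the 1D Schr\"odinger operators $-\partial_t^2 + a(t)^2 \mp a'(t)$ yields \eqref{a4} by explicit computation, and then to lift the result to operator-valued $A(t)$ using double operator integrals.
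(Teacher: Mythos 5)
Your first step (the trace class inclusions) and your recasting of both sides in a common ``local'' form are on the right track, and you have correctly identified the supersymmetric structure $\wt H = D_A D_A^*$, $H = D_A^*D_A$ as the source of the cancellation. But the key identity is not proved; the gap you flag is genuine and neither of your two proposed remedies closes it.

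The homotopy argument does not go through as sketched. With $A_\tau = A^- + \tau B$ one gets
$\partial_\tau \Tr(\wt R_\tau - R_\tau) = -\Tr[\wt R_\tau(BD_\tau^* + D_\tau B)\wt R_\tau - R_\tau(BD_\tau + D_\tau^* B)R_\tau]$,
and you want to simplify this via cyclicity and the intertwining $D_\tau R_\tau = \wt R_\tau D_\tau$. The problem is that even after eliminating the unbounded factors, the resulting operators (e.g.\ $\wt R_\tau^2 B$ and its relatives) are not trace class on $L^2(\R;\H)$: $B$ is only a bounded, noncompact multiplication operator, and $\wt R_\tau^2$ is not trace class either. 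The trace class nature of the combination in \eqref{a4} comes from a genuine cancellation between several such terms, and you cannot justify the individual cyclic rearrangements that your manipulation needs. You have named the difficulty honestly, but ``should follow by repeatedly invoking intertwining together with cyclic permutations'' is precisely the part that requires a proof, and the sketch does not supply one. Your fallback --- scalar case plus a double-operator-integral lift --- is also not a proof: DOI techniques handle functions of two non-commuting self-adjoint operators in a fixed space, but they give no mechanism for passing from $\dim\H=1$ to operator-valued $A(t)$ inside $L^2(\R;\H)$, where $D_A$ mixes the $t$-variable with the $\H$-variable.

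The paper takes a more concrete route. For $\dim\H < \infty$ with $A(t) = A^\pm$ near $\pm\infty$, one builds the resolvent kernel of $H = -d^2/dt^2 + Q$ from matrix Jost solutions $F_\pm$ of $-F'' + QF = zF$, and observes (this is the payoff of supersymmetry) that $\wt F_\pm = (F_\pm' + A F_\pm)(\mp\kappa_\pm + A^\pm)^{-1}$ solves the tilde equation, so $\wt R(t,t)$ is expressible in terms of the same $F_\pm$. Then $\tr(\wt R(t,t) - R(t,t))$ is literally a total $t$-derivative (equation \eqref{b9}), and the boundary terms at $t=\pm R$ produce $\tr(\kappa_+^{-1}A^+ - \kappa_-^{-1}A^-)$. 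This is exactly the mechanism by which the deviation of $K(t,t;z)$ from $\tfrac14(A(t)^2-z)^{-3/2}$ ``integrates away'' --- it is not a soft cancellation argument but an explicit identity. The extension to general $A$ and to $\dim\H=\infty$ is then done by two successive approximation steps (truncating $A'$ to compact support, and inserting finite-rank projections $P_n\to I$), using a strong-resolvent-convergence lemma plus the $S_1$ estimate of Lemma~\ref{lma.b5} to pass to the limit in both sides of \eqref{a4}.

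Two smaller remarks on Step~1. First, your bound on $\|(\wt H - z)^{-1} - (H-z)^{-1}\|_{S_1}$ via ``pointwise kernel bounds for the resolvents of $H$ and $\wt H$'' is not justified as written, since those resolvent kernels depend on $A(t)$ in an uncontrolled way; the paper avoids this by form-comparing $R(z)$ with the free resolvent $R_0(z)$ and placing $R_0(z)^{1/2}A'R_0(z)^{1/2}\in S_1$. Second, $g_z$ is smooth but its derivative does not decay, so ``$g_z$ smooth and $A^+-A^-\in S_1$ implies $g_z(A^+)-g_z(A^-)\in S_1$'' needs the specific structure of $g_z$ (bounded first derivative with integrable Fourier transform), as the paper checks via the representation \eqref{b34}.
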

The identity \eqref{a4} was proven\footnote{This identity is stated in \cite{Callias} with a wrong sign; compare with \cite[Example 4.1]{BGS}} 
in \cite[(3.14)]{Callias} for the case $\dim \H<\infty$
as a particular case of a more general trace identity. We will give a more streamlined proof 
of \eqref{a4}, based on the ideas from \cite{BGS}, where \eqref{a4} was proven in the case
$\dim \H=1$.  This plan of the proof is as follows. 
We first note that the operators $H$ and $\wt H$ can be represented as
\begin{gather}
H=-\frac{d^2}{dt^2}+Q(t),
\qquad
\wt H=-\frac{d^2}{dt^2}+\wt Q(t),
\label{b1}
\\
\text{ where }\quad Q(t)=A(t)^2-A'(t), \qquad  \wt Q(t)=A(t)^2+A'(t).
\label{b1a}
\end{gather}

Then, following \cite{MOlmedilla}, we express the integral kernels of the resolvents $(H-z)^{-1}$ and $(\wt H-z)^{-1}$ 
in terms of the solutions to the operator differential equation 
$-F''+QF=zF$. Integrating the traces of these resolvent kernels over the diagonal, 
we obtain the expression the l.h.s. of \eqref{a4}.

Given \eqref{a4}, one can derive Theorem~\ref{thm.a1} fairly easily by applying the 
Lifshits-Krein trace formula \eqref{a6} to both sides of \eqref{a6}. 
This is done in 
Section~\ref{sec.b}. Proposition~\ref{p1} in the case $\dim\H<\infty$ and  Corollary~\ref{cr.a3} 
are also proven in Section~\ref{sec.b}. 

In Section~\ref{sec.c} (which is of a technical nature) we use an approximation argument to extend 
 Proposition~\ref{p1} 
from the case $\dim\H<\infty$ to the case $\dim \H=\infty$. 

\subsection{ Acknowledgments } This work originated from a question asked by Yuri Tomilov. 
I am  grateful to Yuri Tomilov and Yuri Latushkin for many useful discussions 
and encouragement, and to Nikolai Filonov for providing the proof
to Lemma~\ref{lma.c1}. I am also grateful to M.~Sh.~Birman and to F.~Gesztesy for discussing
the results of the paper.

%%%%%%%%%%%%%%%%%%%%%%%%%%%%%%%%%%
\section{Proofs}\label{sec.b}
%%%%%%%%%%%%%%%%%%%%%%%%%%%%%%%%%%

\subsection{Trace class inclusions}
Our first task is to show that the operators in both sides of \eqref{a4} belong to the trace class. 

%%%%%%%%%%%%%%%%%
\begin{lemma}\label{l.b1}
%%%%%%%%%%%%%%%%%
Under the assumptions of Theorem~\ref{thm.a1}, the operators $(H-z)^{-1}-(\wt H-z)^{-1}$ 
and $g_z(A^+)-g_z(A^-)$ belong to the trace class for any $z\in\C\setminus[0,\infty)$.
\end{lemma}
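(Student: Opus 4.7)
The plan is to treat the two trace class inclusions separately. A preliminary remark is that each need only be checked for a single convenient value of $z$ (say $z<0$), since the standard resolvent identities (applied to $(H-z)^{-1}-(\wt H-z)^{-1}$, and to $g_z(A^\pm)$ viewed through the integral representation below) then propagate the inclusion to all $z\in\C\setminus[0,\infty)$.

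For $g_z(A^+)-g_z(A^-)\in S_1(\H)$, I would use the representation
\begin{equation*}
g_z(s)=\frac{2s}{\pi}\int_0^\infty\frac{d\mu}{\mu^2+s^2-z},
\end{equation*}
which is immediate for $z<0$ from $\int_0^\infty(\mu^2+a^2)^{-1}\,d\mu=\pi/(2a)$ with $a=\sqrt{s^2-z}$, and extends to $z\in\C\setminus[0,\infty)$ by analytic continuation. Inserting this into the functional calculus of $A^\pm$, writing $A^+=A^-+B^+$ with $B^+\in S_1$, and applying the second resolvent identity to the difference inside the integral yields, for each $\mu$, a sum of expressions of the form $T_1(\mu)\,B^+\,T_2(\mu)$ with bounded operators $T_1,T_2$. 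Standard functional-calculus bounds on $(\mu^2+(A^\pm)^2-z)^{-1}$ and on $A^\pm(\mu^2+(A^\pm)^2-z)^{-1}$ then give a trace-norm bound of order $\norm{B^+}_{S_1}\,\mu^{-2}$ as $\mu\to\infty$, so the $\mu$-integral converges in $S_1(\H)$.

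For $(H-z)^{-1}-(\wt H-z)^{-1}\in S_1(L^2(\R;\H))$, the key observation is that by \eqref{b1}--\eqref{b1a}, $\wt H-H=2A'(t)$ acts as a multiplication operator on $L^2(\R;\H)$, so
\begin{equation*}
(H-z)^{-1}-(\wt H-z)^{-1}=2(H-z)^{-1}\,A'(t)\,(\wt H-z)^{-1}.
\end{equation*}
Factoring $A'(t)=\sign(A'(t))|A'(t)|^{1/2}\cdot|A'(t)|^{1/2}$, it suffices to prove that both multiplication operators $|A'(t)|^{1/2}(\wt H-z)^{-1}$ and $|A'(t)|^{1/2}(H-z)^{-1}$ belong to $S_2(L^2(\R;\H))$. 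Taking $z<0$ for concreteness, the squared Hilbert--Schmidt norm of the first is
\begin{equation*}
\Tr\bigl((\wt H-z)^{-1}|A'(t)|(\wt H-z)^{-1}\bigr),
\end{equation*}
which I would evaluate by writing out the operator-valued integral kernel of $(\wt H-z)^{-2}$ on $\R\times\R$, restricting to the diagonal, and taking the $\H$-trace against $|A'(t)|$. The uniform bound $\norm{(\wt H-z)^{-1}}\leq|z|^{-1}$, a pointwise estimate on the diagonal of the kernel, and the hypothesis $\int_\R\norm{A'(t)}_{S_1}\,dt<\infty$ from \eqref{a3} then combine to give the required Hilbert--Schmidt bound.

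The main obstacle is the rigorous construction and control of the operator-valued integral kernel of $(H-z)^{-1}$ in the infinite-dimensional setting, where $A^-$ can be unbounded and the kernel has no closed form. It must be handled either through comparison with the ``frozen-coefficient'' resolvent of $-d^2/dt^2+(A^-)^2$ (explicit via the spectral calculus of $A^-$) together with a perturbative expansion in the bounded multiplication operator $Q(t)-(A^-)^2$, or through a finite-dimensional truncation of the spectral decomposition of $A^-$ followed by passage to the limit. The cleanest route, consistent with the strategy announced for Proposition~\ref{p1}, is to establish the lemma first in the case $\dim\H<\infty$ by direct kernel manipulations and then extend to $\dim\H=\infty$ via the approximation argument deferred to Section~\ref{sec.c}.
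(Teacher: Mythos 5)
Your treatment of $g_z(A^+)-g_z(A^-)$ is a valid alternative route. The paper uses a double-operator-integral representation: writing $\wt g_z$ for the Fourier transform of $g_z'$ and expanding $g_z(A^+)-g_z(A^-)$ as a Bochner integral of factors $e^{-i(t-s)A^+}(A^+-A^-)e^{-isA^-}$, which is manifestly trace class since $A^+-A^-\in S_1$. Your resolvent-integral representation $g_z(s)=\frac{2s}{\pi}\int_0^\infty(\mu^2+s^2-z)^{-1}d\mu$ combined with the second resolvent identity achieves the same thing; once one checks that the integrand is $O(\norm{B^+}_{S_1}\mu^{-2})$ at infinity and uniformly bounded near $\mu=0$ (the latter holds since $\mu^2-z$ is bounded below for $z<0$), the $S_1$-valued integral converges. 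Both arguments are standard and yield the same conclusion.

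For the resolvent difference, however, there is a genuine gap. You correctly write $(H-z)^{-1}-(\wt H-z)^{-1}=2(H-z)^{-1}A'(\wt H-z)^{-1}$ and factor $A'=\sign(A')\abs{A'}^{1/2}\cdot\abs{A'}^{1/2}$, reducing the problem to showing $\abs{A'}^{1/2}(\wt H-z)^{-1}\in S_2$. But your plan to estimate this by ``writing out the operator-valued integral kernel of $(\wt H-z)^{-2}$'' and bounding it on the diagonal stalls precisely where you admit it does: that kernel has no explicit form when $\dim\H=\infty$. The missing idea is not kernel control of $(\wt H-z)^{-1}$ but an \emph{operator inequality that reduces everything to the free resolvent $R_0(z)=(H_0-z)^{-1}=(-d^2/dt^2-z)^{-1}$}, whose kernel is scalar and explicit. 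Concretely: for $z<0$ with $\abs{z}$ large one has the form inequality $\wt H-z\geq\frac12(H_0-z)$ (this follows from $\wt H=H_0+A^2+A'\geq H_0+A'$ and $\norm{R_0(z)^{1/2}A'R_0(z)^{1/2}}\leq\frac12$), which gives a bounded operator $\wt M(z)$ with $\wt R(z)=\wt M(z)R_0(z)^{1/2}$ and $\norm{\wt M(z)}^2\leq2$. Then $\abs{A'}^{1/2}\wt R(z)=\abs{A'}^{1/2}R_0(z)^{1/2}\wt M(z)^*$, and $\norm{\abs{A'}^{1/2}R_0(z)^{1/2}}_{S_2}^2$ is computed directly via the Fourier transform of the free kernel, giving a bound proportional to $\int\norm{A'(t)}_{S_1}dt$. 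Without this comparison to $R_0$, your $S_2$-bound has no foothold. Your fallback---deferring to a finite-dimensional truncation à la Section~\ref{sec.c}---is also not how the paper proceeds and would be awkward logically: the convergence lemma there relies on the same $R_0$-comparison machinery and on the fact that the spectral shift function $\xi(\cdot;\wt H,H)$ is already well defined, which is exactly what the present lemma is needed to establish.
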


Note that Lemma~\ref{l.b1} in particular ensures that the spectral shift function $\xi(\cdot; \wt H,H)$ 
is well defined. 

Let us introduce some notation. 
We denote by $H_0$ the self-adjoint operator $-d^2/dt^2$ in $L^2(\R,\H)$ and by 
$R_0(z)=(H_0-z)^{-1}$ its resolvent. We also denote $R(z)=(H-z)^{-1}$ and $\wt R(z)=(\wt H-z)^{-1}$.

First, we need

%%%%%%%%%%%%%%%%
\begin{lemma}\label{lma.b5}
%%%%%%%%%%%%%%%%
Let $V(t)$, $t\in\R$, be a family of trace class operators in $\H$ such that 
\begin{equation}
\int_{-\infty}^\infty \norm{V(t)}_{S_1}dt <\infty,
\label{b27}
\end{equation}
and let $V$ be the operator in $L^2(\R,\H)$, $(Vu)(t)=V(t)u(t)$.
Then for any $z<0$, one has
\begin{equation}
R_0(z)^{1/2}VR_0(z)^{1/2}\in S_1
\text{ and }
\norm{ R_0(z)^{1/2}VR_0(z)^{1/2} }_{S_1}\leq \frac{1}{4\sqrt{\abs{z}}}\int_{-\infty}^\infty \norm{V(t)}_{S_1}dt. 
\label{b28}
\end{equation}
In particular, $V$ is a relatively form compact perturbation of $H_0$. 
\end{lemma}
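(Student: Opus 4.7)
The plan is to reduce the $S_1$ bound to a Hilbert--Schmidt computation by factoring $V$ via its polar decomposition and exploiting the explicit action of $R_0(z)^{1/2}$ as a scalar convolution operator on $L^2(\R,\H)$. It suffices to treat $z<0$: the bound for general $z\in\C\setminus[0,\infty)$ then follows from the first resolvent identity together with the boundedness of $R_0(z)(H_0-w)$ for $w<0$.

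Write $z=-\k^2$ with $\k>0$. Since $H_0=-d^2/dt^2$ is diagonalised by the Fourier transform in $t$, the operator $R_0(z)^{1/2}$ acts on $L^2(\R,\H)$ as convolution (in $t$, tensored with $I_\H$) with the scalar function $k_\k$ whose Fourier transform is $(\xi^2+\k^2)^{-1/2}$. I do not need an explicit formula for $k_\k$, only its $L^2$ norm: since $k_\k*k_\k$ must equal the standard resolvent kernel $\frac{1}{2\k}e^{-\k\abs{\tau}}$, evaluation at $\tau=0$ yields $\norm{k_\k}_{L^2}^2=1/(2\k)$.

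Next, apply the (measurable) polar decomposition $V(t)=U(t)\abs{V(t)}$ and let $W(t)=\abs{V(t)}^{1/2}$, so that $V=C_2C_1$ where $C_1,C_2$ are the multiplication operators on $L^2(\R,\H)$ by $W(\cdot)$ and $U(\cdot)W(\cdot)$ respectively. Then
$$
R_0(z)^{1/2}VR_0(z)^{1/2}=\bigl(R_0(z)^{1/2}C_2\bigr)\bigl(C_1R_0(z)^{1/2}\bigr),
$$
and it remains only to show that both factors are Hilbert--Schmidt. The operator $C_1R_0(z)^{1/2}$ has the operator-valued integral kernel $(t,t')\mapsto W(t)k_\k(t-t')$, and a direct calculation gives
$$
\norm{C_1R_0(z)^{1/2}}_{S_2}^2=\iint\norm{W(t)}_{S_2(\H)}^2\,k_\k(t-t')^2\,dt\,dt'=\norm{k_\k}_{L^2}^2\int\norm{V(t)}_{S_1}\,dt.
$$
The analogous computation for $R_0(z)^{1/2}C_2$ uses $\norm{U(t)W(t)}_{S_2(\H)}^2=\tr(W(t)U(t)^*U(t)W(t))=\tr(\abs{V(t)})$, since $U(t)^*U(t)$ is the projection onto $\overline{\Ran\abs{V(t)}}\supset\Ran W(t)$. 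The inequality $\norm{AB}_{S_1}\le\norm{A}_{S_2}\norm{B}_{S_2}$ then yields \eqref{b28}, with the constant arising from $\norm{k_\k}_{L^2}^2=1/(2\k)=1/(2\sqrt{\abs{z}})$. Relative form compactness of $V$ with respect to $H_0$ is immediate from $S_1\subset S_\infty$ applied to $(H_0+\abs{z})^{-1/2}V(H_0+\abs{z})^{-1/2}$.

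The only mildly technical points are the measurability of $t\mapsto U(t)$ (routine, given that $t\mapsto V(t)$ is $S_1$-measurable by hypothesis) and a Fubini step in the Hilbert--Schmidt norm computation, both of which are harmless since all integrands are nonnegative and the total integral is finite by \eqref{b27}. These are bookkeeping rather than conceptual obstacles; the real content of the lemma is the kernel identification for $R_0(z)^{1/2}$ together with the polar-decomposition factorisation of $V$, which are precisely what lets one replace the operator-valued multiplication by two multiplications by $S_2$-valued functions.
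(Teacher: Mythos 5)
Your argument is correct and follows essentially the same route as the paper: factor $V$ through its polar decomposition so that $R_0(z)^{1/2}VR_0(z)^{1/2}$ becomes a product of two Hilbert--Schmidt operators, and compute the Hilbert--Schmidt norms from the convolution kernel of $R_0(z)^{1/2}$. The paper writes $V=\abs{V}^{1/2}\sign(V)\abs{V}^{1/2}$ where you write $V=(U\abs{V}^{1/2})\,\abs{V}^{1/2}$; these are the same device. One caveat: your computation $\norm{k_\k}_{L^2}^2=1/(2\k)$ is right and therefore yields the constant $\tfrac{1}{2\sqrt{\abs{z}}}$, not the $\tfrac{1}{4\sqrt{\abs{z}}}$ printed in \eqref{b28}, so the final sentence of your derivation does not literally ``yield \eqref{b28}.'' Taking $V(t)=v(t)P$ with $v\geq 0$ and $P$ a fixed rank-one projection gives $\norm{R_0(z)^{1/2}VR_0(z)^{1/2}}_{S_1}=\tfrac{1}{2\k}\int v$ exactly, so the bound with $\tfrac{1}{4\sqrt{\abs{z}}}$ cannot hold and the printed constant is a slip in the paper. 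This is harmless --- the estimate is only ever invoked to make $R_0(z)^{1/2}B'R_0(z)^{1/2}$ small for $\abs{z}$ large --- but you should state your conclusion with $\tfrac{1}{2\sqrt{\abs{z}}}$ rather than claiming the printed inequality verbatim.
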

\begin{proof}
Write $V(t)=\abs{V(t)}^{1/2} \sign(V(t))\abs{V(t)}^{1/2}$. Using the Fourier transform, 
one easily checks that 
$$
\abs{V}^{1/2}R_0(z)^{1/2}\in S_2 \quad
$$
and
$$
\norm{\abs{V}^{1/2} R_0(z)^{1/2}}_{S_2}^2
\leq 
\frac{1}{4\sqrt{\abs{z}}} \int_{-\infty}^\infty \norm{\abs{V(t)}^{1/2}}^2_{S_2}dt
=
\frac{1}{4\sqrt{\abs{z}}}\int_{-\infty}^\infty \norm{V(t)}_{S_1}dt. 
$$
It remains to write 
\begin{multline*}
\norm{ R_0(z)^{1/2}VR_0(z)^{1/2} }_{S_1}
=
\norm{ R_0(z)^{1/2}\abs{V}^{1/2}\sign(V) \abs{V}^{1/2} R_0(z)^{1/2} }_{S_1}
\\
\leq
\norm{\abs{V}^{1/2} R_0(z)^{1/2}}_{S_2}^2,
\end{multline*}
which completes the proof.
\end{proof}

\begin{proof}[Proof of Lemma~\protect\ref{l.b1}]
\mbox{}

1. First we consider the difference of resolvents $R(z)-\wt R(z)$. 
By a well known argument, it suffices to prove that $R(z)-\wt R(z)\in S_1$ 
for al least one value of $z$. 
Using Lemma~\ref{lma.b5}, let us choose $z\in(-\infty,-1)$ with $\abs{z}$ sufficiently large
so that 
\begin{equation}
\norm{R_0(z)^{1/2}B'R_0(z)^{1/2}}\leq 1/2.
\label{b31}
\end{equation}
We have 
\begin{multline*}
H-z=H_0+A^2-B'-z\geq H_0-B'-z
\\
=(H_0-z)^{1/2}(I-R_0(z)^{1/2} B' R_0(z)^{1/2})(H_0-z)^{1/2};
\end{multline*}
the inequality is understood in the sense of the quadratic forms.
Thus we have 
$$
R(z)\leq R_0(z)^{1/2}(I-R_0(z)^{1/2} B' R_0(z)^{1/2})^{-1} R_0(z)^{1/2}
\leq 2 R_0(z).
$$
It follows that the resolvent $R(z)$ can be represented as
\begin{equation}
R(z)=M(z) R_0(z)^{1/2}=R_0(z)^{1/2}M(z)^*
\quad \text{ with }\quad 
\norm{M(z)}^2\leq 2. 
\label{b32}
\end{equation}
The same argument works for $\wt R(z)$, so we have
\begin{equation}
\wt R(z)=\wt M(z) R_0^{1/2}(z)= R_0(z)^{1/2}\wt M(z)^*
\quad \text{ with }\quad 
\norm{\wt M(z)}^2\leq 2. 
\label{b33}
\end{equation}
Now we can rewrite the resolvent identity as 
$$
R(z)-\wt R(z)= 2 R(z) B' \wt R(z)
=
2M(z)(R_0(z)^{1/2}B' R_0(z)^{1/2}) \wt M(z)^*,
$$
and, by Lemma~\ref{lma.b5}, the r.h.s. belongs to the trace class.

2. Consider the difference $g_z(A^+)-g_z(A^-)$. 
We follow the well known argument (see e.g. \cite{BYa}). 
Let $\wt g_z$ be the Fourier transform of the derivative of $g_z$; 
then $\wt g_z$ is absolutely integrable and we can write 
\begin{equation}
g_z(\lambda)=g_z(0)+i
\int_{-\infty}^\infty 
\frac{e^{-i\lambda t}-1}{t} \wt g_z(t) dt
\quad \text{ with } \quad \int_{-\infty}^\infty \abs{\wt g_z(t)}dt<\infty.
\label{b34}
\end{equation}
Thus, we have the representation
\begin{equation}
g_z(A^+)-g_z(A^-)
=
\int_{-\infty}^\infty 
dt\,\, \wt g_z(t) t^{-1} \int_0^s ds e^{-i(t-s)A^+}(A^+-A^-)e^{-isA^-}.
\label{b34a}
\end{equation}
Since 
$$
\norm{e^{-i(t-s)A^+}(A^+-A^-)e^{-isA^-}}_{S_1}\leq \norm{(A^+-A^-)}_{S_1},
$$
the integral in the r.h.s of \eqref{b34a} converges in trace norm. 
\end{proof}

\subsection{Proof of Theorem~\ref{thm.a1}}
1. Denote for brevity $\xi(\lambda)=\xi(\lambda;\wt H,H)$ and 
$\eta(\lambda)=\xi(\lambda;A^+,A^-)$.
According to the Lifshits-Krein trace formula \eqref{a6}, 
the identity  \eqref{a4} can be rewritten as 
$$
-\int_0^\infty \frac{\xi(t)}{(t-z)^2}dt
=
\frac1{2z}\int_{-\infty}^\infty \eta(t)
\left(\frac{\partial}{\partial t}g_z(t)  \right)dt,
$$
which can be rewritten as 
$$
\int_0^\infty \xi(t) \left(\frac{\partial}{\partial z} (t-z)^{-1}\right)dt
=
\int_{-\infty}^\infty\eta(t) \left( \frac{\partial }{\partial z}\frac{1}{\sqrt{t^2-z}}\right) dt.
$$
Integrating over $z$, we get 
\begin{equation}
\int_0^\infty \xi(t)\left( \frac1{t-z}-\frac1{t+1}\right) dt
=
 \int_{-\infty}^\infty \eta(t) \left(\frac1{\sqrt{t^2-z}}-\frac1{\sqrt{t^2+1}} \right) dt.
\label{b15}
\end{equation}

2. Now we would like to take the imaginary parts of both sides of \eqref{b15} and
pass to the limit as $z\to\lambda+i0$ for a.e. $\lambda>0$.
By the well known properties of the Cauchy integrals, we have for a.e. $\lambda>0$
$$
\xi(\lambda)=\lim_{\e\to+0}\frac1\pi  \int_0^\infty \xi(t) \, \Im \frac{1}{t-\lambda-i\e} dt.
$$
Now consider the r.h.s. of \eqref{b15}.
As $\eta\in L^1(\R)$, it is easy to see that the integral
$$
\int_{-\infty}^\infty \frac{\abs{\eta(t)}}{\abs{t^2-\lambda}^{1/2}} dt
$$
converges for a.e. $\lambda>0$.
By the dominated convergence theorem, this ensures that 
$$
\Im \int_{-\infty}^\infty \frac{\eta(t)}{\sqrt{t^2-\lambda-i\e}}dt
\to 
\int_{-\sqrt{\lambda}}^{\sqrt{\lambda}}\frac{\eta(\lambda)}{\sqrt{\lambda-t^2}}dt,
\quad \e\to +0
$$
for a.e. $\lambda>0$. This allows us to pass to the limit in \eqref{b15},
which yields the required result.
\qed

\subsection{Proof of Corollary~\ref{cr.a3}}

1.
In order to check that $D_A$ is Fredholm, we use the following necessary and sufficient condition from 
\cite[Theorem~A.4]{AvronSeilerSimon}: 
\textit{ A closed operator $T$ is Fredholm if and only if $0\notin\sigma_{ess}(T^*T)$ and $0\notin\sigma_{ess}(TT^*)$
and then $\iindex(T)=\dim\Ker(T^*T)-\dim\Ker (T T^*)$. }

Using our assumptions \eqref{a3} and  $0\in\rho(A^-)\cap\rho(A^+)$, 
we can find $a>0$ such that for all sufficiently large $\abs{t}$, one has
$A(t)^2\geq aI$. 
One has
$$
H=H_0+A(t)^2-B'(t)\geq H_0+aI + V(t),
$$
where 
$$
V(t)=-B'(t)+(A(t)^2-aI)E_{A(t)^2}([0,a]).
$$
The operator $(A(t)^2-aI)E_{A(t)^2}([0,a])$ is of a finite rank for all $t\in\R$ and vanishes
for all sufficiently large $\abs{t}$. Thus, 
$V$ satisfies \eqref{b27} and therefore, by Lemma~\ref{lma.b5}, 
$V$ is a relatively form compact perturbation of $H_0$.
It follows that $\sigma_{ess}(H_0+aI+V)=\sigma_{ess}(H_0+aI)=[a,\infty)$. 
Thus, $\inf\sigma_{ess}(H)\geq a>0$. The same argument applies to $\wt H$. 
By the necessary and sufficient condition quoted above, 
$D_A$ is a Fredholm operator and 
\begin{equation}
\iindex D_A=\dim \Ker H-\dim \Ker \wt H.
\label{b29}
\end{equation}

2. 
By the previous step, the spectra of $H$ and $\wt H$ are discrete on $[0,a)$. 
It is well known (see e.g. \cite[Lemma~A.3]{AvronSeilerSimon}) that, 
since $H=D_A^*D_A$ and $\wt H=D_A D_A^*$, one has 
$\sigma(H)\setminus\{0\}=\sigma(\wt H)\setminus\{0\}$ and the 
multiplicities of the eigenvalues coincide. 
Thus, from \eqref{a7} we get
\begin{equation}
\xi(\lambda;\wt H,H)=\dim \Ker H-\dim \Ker \wt H, 
\qquad
\lambda\in(0,a).
\label{b30}
\end{equation}
This identity and its relation to index of $D_A$ is due to \cite{BGS}.

3. 
Combining  Theorem~\ref{thm.a1} with \eqref{b29} and \eqref{b30}, we obtain
$$
\iindex D_A=\frac1\pi\int_{-\sqrt{\lambda}}^{\sqrt{\lambda}}\frac{\xi(t;A^+,A^-)}{\sqrt{\lambda-t^2}}dt, 
\quad \lambda\in(0,a).
$$
Since $0\in\rho(A^-)\cap\rho(A^+)$, 
the function  $\xi(t;A^+,A^-)$ is constant near $t=0$.  
Taking into account the identity 
$$
\frac1\pi \int_{-\sqrt{\lambda}}^{\sqrt{\lambda}}\frac{1}{\sqrt{\lambda-t^2}}dt=1,
$$
we arrive at \eqref{a5}.
\qed

\subsection{Proof of Proposition~\ref{p1} for ``nice'' $A(t)$}

Our proof of Proposition~\ref{p1} consists of two steps: we first 
prove the identity \eqref{a4} for very ``nice" functions $A(t)$ 
and then use approximation argument. 
Here we present the first step; the approximation argument, which has a more technical 
nature, is given in section~\ref{sec.c}.

\begin{lemma}\label{lma.b1}
Suppose that $\dim \H<\infty$. Assume also that $A(t)=A^\pm$ for 
all sufficiently large $\pm t>0$.
Then Proposition~\ref{p1} holds true.
\end{lemma}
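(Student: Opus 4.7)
The plan is to carry out, in the matrix-valued setting, the strategy outlined after Proposition~\ref{p1}: represent $H$ and $\wt H$ as matrix Schr\"odinger operators on the line, construct their resolvent integral kernels via Jost-type solutions of $-F''+QF=zF$, and integrate the trace of the difference over the diagonal. Under the lemma's hypotheses the potentials $Q(t)=A(t)^{2}-A'(t)$ and $\wt Q(t)=A(t)^{2}+A'(t)$ are matrix-valued and both equal to the constant matrices $(A^{\pm})^{2}$ for $\pm t\geq T$, where $T>0$ is chosen so that $A(t)=A^{\pm}$ for $\pm t\geq T$. This places us in a classical finite-range matrix scattering situation in which no analytic subtleties arise.

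For $z\in\C\setminus[0,\infty)$ I would fix the operator square roots $k_{\pm}=((A^{\pm})^{2}-z)^{1/2}$ with $\Re k_{\pm}>0$, and define the matrix Jost solutions $\Psi^{H}_{\pm}(t,z)$ as the unique $\dim\H\times\dim\H$ matrix solutions of $-\Psi''+Q\Psi=z\Psi$ satisfying $\Psi^{H}_{\pm}(t,z)=e^{\mp tk_{\pm}}$ for $\pm t\geq T$, and analogously $\Psi^{\wt H}_{\pm}$ using $\wt Q$. A direct computation on the two constancy regions, combined with the intertwining $\wt H D_{A}=D_{A}H$ and uniqueness for linear ODEs, yields the key identities
\[
D_{A}\Psi^{H}_{\pm}=(A^{\pm}\mp k_{\pm})\Psi^{\wt H}_{\pm},\qquad D_{A}^{*}\Psi^{\wt H}_{\pm}=(A^{\pm}\pm k_{\pm})\Psi^{H}_{\pm}.
\]
These rest on the commutativity $[A^{\pm},k_{\pm}]=0$ and on the algebraic identity $(A^{\pm}-k_{\pm})(A^{\pm}+k_{\pm})=z$, which will be the source of the prefactor $1/(2z)$ in the final formula.

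Standard Sturm--Liouville theory then expresses the resolvent kernels $G^{H}(\cdot,\cdot;z)$ and $G^{\wt H}(\cdot,\cdot;z)$ in terms of the Jost solutions and a matrix Wronskian; by Lemma~\ref{l.b1} and the smoothness of the kernels in this finite-dimensional setting one has $\Tr((\wt H-z)^{-1}-(H-z)^{-1})=\int_{\R}\tr(G^{\wt H}(t,t;z)-G^{H}(t,t;z))\,dt$. I would next use the intertwining identities to rewrite the $\Psi^{\wt H}$-factors appearing in $G^{\wt H}(t,t;z)$ in terms of $\Psi^{H}_{\pm}$ and $D_{A}\Psi^{H}_{\pm}$, so that both integrands live in a common Jost frame. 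The factors $(A^{\pm}\pm k_{\pm})^{\pm 1}$ inherited from the intertwining should combine with the matrix Wronskian to collapse the diagonal difference into a total $t$-derivative of an explicit expression built from $\Psi^{H}_{\pm}$. Integration over $\R$ then reduces to boundary contributions at $t=\pm\infty$; substituting the prescribed values $\Psi^{H}_{\pm}(t,z)=e^{\mp tk_{\pm}}$ on the constancy regions, these boundary terms evaluate to $\tfrac{1}{2z}\tr(A^{+}k_{+}^{-1}-A^{-}k_{-}^{-1})=\tfrac{1}{2z}\tr(g_{z}(A^{+})-g_{z}(A^{-}))$, which is the right-hand side of \eqref{a4}.

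The main obstacle is the algebraic step producing the total-derivative form with exactly the prefactor $\tfrac{1}{2z}$. The factor $\tfrac12$ reflects the constant-coefficient Green function $\tfrac{1}{2k}e^{-k|t-s|}$, while the $1/z$ emerges from the inversion $(A^{\pm}\mp k_{\pm})^{-1}=z^{-1}(A^{\pm}\pm k_{\pm})$ used to convert between Jost solutions of $H$ and $\wt H$. Because $\dim\H<\infty$ and all coefficients are constant outside $[-T,T]$, the entire argument reduces to finite-dimensional matrix bookkeeping with Jost and Wronskian data, with no functional-analytic complications; once the total-derivative identity is isolated, the boundary evaluation at $\pm\infty$ is immediate from the exponential normalization of the Jost solutions.
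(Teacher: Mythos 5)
Your plan---represent $H$ and $\wt H$ as matrix Schr\"odinger operators on the line, build the resolvent kernels from matrix Jost solutions and a matrix Wronskian, and reduce the trace of the kernel difference to boundary terms at $\pm\infty$---is essentially the paper's proof, with the small conceptual improvement of invoking the intertwining $\wt H D_A=D_AH$ to relate the $\wt H$-Jost solutions to the $H$-Jost solutions, rather than verifying that relation by direct substitution as the paper does. However, two steps as you state them do not go through.

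First, your intertwining identities put the constant factor on the wrong side. Since the $\wt H$-equation has the $t$-dependent potential $\wt Q(t)$, left-multiplying a matrix solution by a fixed matrix does not in general produce another solution; thus $(A^\pm\mp k_\pm)\Psi^{\wt H}_\pm$ need not solve $-\Psi''+\wt Q\Psi=z\Psi$, and the ODE-uniqueness argument you invoke does not apply to your equality. The correct relations, corresponding to the paper's formula $\wt F_\pm=(F_\pm'+AF_\pm)(\mp\k_\pm+A^\pm)^{-1}$, place the factor on the right: $D_A\Psi^H_\pm=\Psi^{\wt H}_\pm(A^\pm\mp k_\pm)$ and $D_A^*\Psi^{\wt H}_\pm=\Psi^H_\pm(A^\pm\pm k_\pm)$. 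Your versions agree with these only on the constancy regions, where $A^\pm$, $k_\pm$ and $e^{\mp k_\pm t}$ all commute, not for general $t$. Second, you cite ``standard Sturm--Liouville theory'' for the resolvent kernel without verifying that the matrix Wronskian $W(\Psi^H_+,\Psi^H_-)$ is invertible---this is a genuine step, not a formality. The paper obtains it (see \eqref{b5}--\eqref{b7}) from the observation that $H=D_A^*D_A\geq0$ has no negative eigenvalues, which forces the asymptotic coefficient matrices $a$ and $c$ in the expansion of $F_\pm$ to have trivial kernel, whence $W(F_+,F_-)=2\k_+c=2a^*\k_-$ is invertible when $\dim\H<\infty$. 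With these two corrections your argument reproduces the paper's proof.
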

\begin{proof}
1.
Of course, in the finite dimensional case all operators belong to the trace class, 
so we only need to prove the identity \eqref{a4}.
In the case $\dim \H=1$, this formula has been proven in \cite[Example~4.1]{BGS}.
Below is a direct generalisation of the argument of \cite{BGS}
 to the matrix-valued case. We call elements of $\H$ ``matrices''.

By analyticity in $z$ it suffices to consider the case of real 
negative $z$. In what follows, we assume $z\in (-\infty,0)$ and denote
$$
\varkappa_\pm=\sqrt{(A^\pm)^2-z}.
$$
The matrices $\k_\pm$ in $\H$ are positive definite.

2. We will compute the trace in the l.h.s. of \eqref{a4} by 
constructing the integral kernels of the resolvents of $H$ and $\wt H$
and evaluating integrals of these kernels over the diagonal. 
Our first aim is to construct the resolvent of $H$ in terms of 
the solutions to the matrix Schrodinger equation.
Here we follow \cite{MOlmedilla}.

Consider the matrix valued solutions $F_\pm(t)$,  $t\in\R$,
to the equation (see \eqref{b1}, \eqref{b1a})
\begin{equation}
-F''_\pm+QF_\pm=zF_\pm,
\label{b2}
\end{equation}
satisfying the asymptotic conditions
\begin{equation}
F_\pm(t)=e^{\mp \k_\pm t},
\quad \pm t>0 \text{ large.}
\label{b4}
\end{equation}
Existence of solutions $F_\pm$ can be proven 
in the usual way by converting \eqref{b2}, \eqref{b4}
into Volterra type integral equations.

We have the relations
\begin{equation}
\begin{split}
F_+(t)&=e^{-\k_- t}a + e^{\k_- t} b,
\quad -t>0\text{ large,}
\\
F_-(t)&=e^{\k_+ t}c + e^{-\k_+ t} d,
\quad t>0\text{ large,}
\end{split}
\label{b5}
\end{equation}
for some matrices $a$, $b$, $c$, $d$ (which, of course, depend 
on $z<0$).
From here it is straightforward to see that if either $a$ or $c$ has
a non-trivial kernel, then $z$ is an eigenvalue of $H$.
But, by definition, $H$ cannot have negative eigenvalues, so we have
\begin{equation}
\Ker a=\Ker c=\{0\}
\label{b6}
\end{equation}
for all $z<0$.

3. For any two solutions $F$, $G$ to the  
equation \eqref{b2}, let us define the Wronskian
$$
W(F,G)=F(t)^*G'(t)-F'(t)^* G(t).
$$
By a direct calculation, the Wronskian does not depend 
on $t$. In particular, using the limiting forms \eqref{b5} 
of the solutions $F_\pm$, we obtain
\begin{equation}
W(F_+,F_-)=2\k_+c=2a^*\k_-.
\label{b7}
\end{equation}
By \eqref{b6}, it follows that $W(F_+,F_-)$ is invertible
(it is here that we need $\dim \H<\infty$).
Now we can construct the integral kernel $R(t,s)$ 
of the resolvent $(H-z)^{-1}$ as in \cite{MOlmedilla}.
We have 
\begin{equation}
R(t,s)=
\begin{cases}
F_+(t)(W(F_+,F_-)^*)^{-1}F_-^*(s),\quad t\geq s,
\\
F_-(t)(W(F_+,F_-))^{-1}F_+^*(s),\quad t<s.
\end{cases}
\label{b7a}
\end{equation}

4. As above, we can construct the integral kernel $\wt R(x,y)$
of the resolvent $(\wt H-z)^{-1}$ in terms of the solutions 
$\wt F_\pm(t)$ to 
\begin{equation}
 -\wt F''_\pm+\wt Q \wt F_\pm=z\wt F_\pm, 
\text{ and } \wt F_\pm(t)=e^{\mp \k_\pm t},
\quad \pm t>0 \text{ large.}
\label{b8}
\end{equation}
By a direct calculation, the function 
$$
\wt F_\pm(t)=(F_\pm'(t)+A(t)F_\pm(t))(\mp\k_\pm+A^\pm)^{-1}
$$
satisfies \eqref{b8}. Also by a direct calculation,
$$
W(\wt F_+,\wt F_-)
=
z(-\k_++A^+)^{-1}W(F_+,F_-)(\k_-+A^-)^{-1}.
$$
This allows us to compute the kernel $\wt R(t,s)$ in terms of the solutions
$F_\pm$:
\begin{equation}
R(t,s)=
\begin{cases}
\frac1z (F'_+(t)+A(t)F_+(t)) (W(F_+,F_-)^*)^{-1}(F'_-(s) +A(s)F_-(s))^*,\quad t\geq s,
\\
\frac1z (F'_-(t)+A(t)F_-(t)) (W(F_+,F_-))^{-1}(F'_+(s)+A(s)F_+(s))^*,\quad t<s.
\end{cases}
\label{b8a}
\end{equation}

5. Now we are ready to compute the trace in the l.h.s. of \eqref{a4}:
$$
\Tr((\wt H-z)^{-1}-(H-z)^{-1})
=
\lim_{R\to\infty}\int_{-R}^R \tr(\wt R(t,t)-R(t,t))dt.
$$
Using our formulas \eqref{b7a}, \eqref{b8a} 
for the kernels $R(t,t)$ and $\wt R(t,t)$, we obtain
$$
\tr(\wt R(t,t)-R(t,t))
=
\tr (W(F_+,F_-)^*)^{-1}(\frac1z((F_-')^*+F_-^*A)(F_+'+AF_+)-F^*_-F_+).
$$
Integrating by parts, after a little algebra we get 
\begin{equation}
\int_{-R}^R\tr(\wt R(t,t)-R(t,t))dt
=\frac1z\tr((W(F_+,F_-)^*)^{-1}F_-^*(F_+'+AF_+))\mid_{-R}^R.
\label{b9}
\end{equation}
Now we can calculate the r.h.s. of \eqref{b9} for large $R>0$,
using formula \eqref{b7}  and the asymptotic forms \eqref{b4}, \eqref{b5} of $F_\pm$: 
\begin{align*}
\tr(W(F_+,F_-)^{-1}F_-^*F_+')\mid_{-R}^R
&=
-\frac12\tr((c^*)^{-1}d^* e^{-2\k_+ R})-\frac12\tr(b a^{-1} e^{-2\k_- R}),
\\
\tr(W(F_+,F_-)^{-1}F_-^*AF_+)\mid_{-R}^R
&=
\frac12\tr(\k_+^{-1} A^+-\k_-^{-1}A^-)
\\
+
\frac12\tr(\k_+^{-1}(c^*)^{-1}d^*  &A^+ e^{-2\k_+ R})
-
\frac12\tr(b a^{-1}\k_-^{-1}A^- e^{-2\k_- R}).
\end{align*}
As $\k_\pm$ are positive definite matrices, we have 
$\norm{e^{-2\k_\pm R}}\to 0$ as $R\to\infty$.
Thus, 
$$
\Tr((\wt H-z)^{-1}-(H-z)^{-1})=\frac1{2z}\tr(\k_+^{-1}A^+-\k_-^{-1}A^-),
$$
as required.
\end{proof}

%%%%%%%%%%%%%%%%%%%%%%%%%%%%%%%%%%%%%%%%%%%%%%%
%%%%%%%%%%%%%%%%%%%%%%%%%%%%%%%%%%%%%%%%%%%%%%%
\section{Approximation argument}\label{sec.c}
%%%%%%%%%%%%%%%%%%%%%%%%%%%%%%%%%%%%%%%%%%%%%%%
%%%%%%%%%%%%%%%%%%%%%%%%%%%%%%%%%%%%%%%%%%%%%%%

First we give a general statement about convergence in both sides of the identity \eqref{a4}
and then construct the approximating sequence $A_n(t)$. 

\subsection{Convergence in \eqref{a4}}
Let $A(t)=A^-+B(t)$ and $A_n(t)=A_n^-+B_n(t)$
be operator families satisfying \eqref{a13}. 
As above, we assume $B(-\infty)=B_n(-\infty)=0$ 
and define $A^+_n=A_n^-+B_n(+\infty)$. 
We also define $H_n=D_{A_n}^*D_{A_n}$,
$\wt H_n=D_{A_n}D^*_{A_n}$, and 
$$
R_n(z)=(H_n-z)^{-1}, \quad \wt R_n(z)=(\wt H_n-z)^{-1}.
$$
%%%%%%%%%%%%%%%%%%%%%%%
\begin{lemma}\label{l.c1}
Assume that $\Dom A^-\subset \Dom A_n^-$ for all $n$ and 
$A_n^- f\to A^-f$ for all $f\in\Dom(A^-)$. Next, assume that 
\begin{equation}
\int_{-\infty}^\infty \norm{B'_n(t)-B'(t)}_{S_1}dt\to 0 \quad \text{ as $n\to\infty$.}
\label{c5}
\end{equation}
Then, for all $z\in(-\infty, -1)$ with sufficiently large $\abs{z}$, one has
\begin{gather}
\norm{(g_z(A^+)-g_z(A^-))-(g_z(A_n^+)-g_z(A_n^-))}_{S_1}\to 0,
\label{c6}
\\
\norm{(R(z)-\wt R(z))-(R_n(z)-\wt R_n(z))}_{S_1}\to 0.
\label{c7}
\end{gather}
\end{lemma}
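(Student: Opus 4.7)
The plan is to reduce both conclusions to dominated-convergence-in-$S_1$ arguments applied to the integral representations built in the proof of Lemma~\ref{l.b1}. A common preliminary observation is that \eqref{c5} implies $\sup_n \int_{-\infty}^\infty \|B_n'(t)\|_{S_1}\,dt < \infty$ and $\|B_n^+ - B^+\|_{S_1} \to 0$ (since $B^+$ and $B_n^+$ are Bochner integrals in $S_1$ of their derivatives), and in particular operator-norm convergence $B_n^+ \to B^+$ holds.

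For \eqref{c7} I would recycle the identity from step~1 of the proof of Lemma~\ref{l.b1},
\[
R(z) - \wt R(z) = 2 M(z) \bigl( R_0(z)^{1/2} B' R_0(z)^{1/2} \bigr) \wt M(z)^*,
\]
together with the analogous identity for the $n$-indexed objects. Since $\int \|B_n'\|_{S_1}\,dt$ is uniformly bounded, taking $|z|$ large enough uniformly in $n$ secures $\|M_n(z)\|, \|\wt M_n(z)\| \le \sqrt 2$ for all $n$. Lemma~\ref{lma.b5} applied to $V = B' - B_n'$ gives $\|R_0(z)^{1/2}(B' - B_n') R_0(z)^{1/2}\|_{S_1} \to 0$. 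Granted strong convergence $M_n(z) \to M(z)$ and $\wt M_n(z)^* \to \wt M(z)^*$, the standard fact that $X_n C_n Y_n \to X C Y$ in $S_1$ whenever $C_n \to C$ in $S_1$, $X_n \to X$ and $Y_n^* \to Y^*$ strongly, and $\|X_n\|, \|Y_n\|$ are uniformly bounded, then yields \eqref{c7}. Strong convergence of $M_n$ follows from strong resolvent convergence $H_n \to H$ via the identity $M_n(z) R_0(z)^{1/2} = R_n(z)$: pointwise strong convergence $R_n(z)f \to R(z)f$ extends $M_n \to M$ from the dense subspace $\Ran R_0(z)^{1/2}$ to all of $L^2(\R;\H)$ using the uniform bound $\|M_n\| \le \sqrt 2$.

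For \eqref{c6} I would use the representation \eqref{b34a} to write
\[
\bigl[g_z(A^+) - g_z(A^-)\bigr] - \bigl[g_z(A_n^+) - g_z(A_n^-)\bigr] = \int_{-\infty}^\infty \wt g_z(t)\, t^{-1} \int_0^t \bigl(\Phi(t,s) - \Phi_n(t,s)\bigr)\, ds\, dt,
\]
where $\Phi(t,s) = e^{-i(t-s)A^+}(A^+-A^-)e^{-isA^-}$ and analogously for $\Phi_n$. The $S_1$ norm of $\Phi_n - \Phi$ is majorized uniformly by $\|B_n^+\|_{S_1} + \|B^+\|_{S_1}$, giving an integrable majorant against $|\wt g_z(t)|$. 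Pointwise $S_1$ convergence at each $(t,s)$ is a three-term telescoping: the middle term $e^{-i(t-s)A^+}(B_n^+ - B^+)e^{-isA_n^-}$ has $S_1$ norm $\le \|B_n^+ - B^+\|_{S_1} \to 0$, and the outer two terms combine the fact that ``$X_n \to X$ strongly and $Y \in S_1$ imply $X_n Y \to X Y$ in $S_1$'' with Trotter--Kato strong convergence $e^{-isA_n^\pm} \to e^{-isA^\pm}$, itself a consequence of strong resolvent convergence $A_n^\pm \to A^\pm$. Dominated convergence then delivers \eqref{c6}.

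The hard part will be justifying the two strong-resolvent-convergence inputs. For $A_n^- \to A^-$, the inclusion $\Dom A^- \subset \Dom A_n^-$ together with $A_n^- f \to A^- f$ on the core $\Dom A^-$ is exactly the classical hypothesis of the strong-resolvent-convergence theorem, and $A_n^+ \to A^+$ then follows via $A_n^+ = A_n^- + B_n^+$ and norm convergence $B_n^+ \to B^+$. For $H_n \to H$ the argument is more delicate: one verifies convergence of the quadratic forms $q_n(u) = \|D_{A_n} u\|^2$ on a common core of smooth compactly supported $\Dom(A^-)$-valued functions, using the pointwise convergence $A_n(t) u(t) \to A(t) u(t)$ together with a dominant obtained from uniform boundedness of $\{A_n^- u(t) : t \in \supp u\}$, and then invokes a standard form-convergence theorem to conclude strong resolvent convergence of $H_n$ to $H$ (and likewise for $\wt H_n$ to $\wt H$).
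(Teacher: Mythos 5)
Your proof of \eqref{c6} is essentially the paper's: the same Duhamel representation \eqref{b34a}, the same three-term telescoping of the integrand, the same uniform $S_1$ bound giving a dominant, and the same appeal to Trotter--Kato strong convergence of $e^{-isA_n^\pm}$ combined with Proposition~\ref{prp.b1}. The outer structure of your proof of \eqref{c7} --- uniform bounds on $M_n$, $\wt M_n$ for $|z|$ large, the resolvent-identity telescoping, and factoring out $R_0(z)^{1/2}$ so that each term is a product of strongly convergent bounded factors with a trace class middle --- also matches the paper's argument.

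The gap is in how you obtain the crucial strong resolvent convergence $H_n\to H$, $\wt H_n\to\wt H$ (equivalently, strong convergence $M_n\to M$, $\wt M_n\to\wt M$). You propose to verify convergence of the quadratic forms $q_n(u)=\|D_{A_n}u\|^2\to\|D_A u\|^2$ on a common form core and then ``invoke a standard form-convergence theorem''. There is no such standard theorem in the generality you need: the classical form-convergence criteria require either monotonicity of the sequence of forms (Kato's Theorems VIII.3.11, VIII.3.13) or Mosco-type lower/upper semicontinuity conditions, and you verify neither. Pointwise convergence of non-negative closed forms on a common form core of the limit form does not by itself control what $q_n$ does on $\Dom q_n\setminus\Dom q$, and hence does not by itself yield strong resolvent convergence. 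The paper circumvents exactly this difficulty with the bespoke Lemma~\ref{lma.c1} (credited to Filonov): it proves $(D_n^*D_n-z)^{-1}\to(D^*D-z)^{-1}$ strongly by a weak-compactness argument that uses \emph{both} $D_n\chi\to D\chi$ for $\chi\in\Dom D$ \emph{and} $D_n^*\chi\to D^*\chi$ for $\chi\in\Dom D^*$; the adjoint convergence is what lets one identify the weak limit as lying in $\Dom D$ (step~3 of that proof). This adjoint information is precisely what the form data $\|D_n u\|^2$ on a form core of $H=D^*D$ does not give you, so the form-theoretic route cannot be ``standard'' here; you would have to supply an argument of essentially the same depth as Lemma~\ref{lma.c1}. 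In your write-up this lemma-sized piece is replaced by a vague citation, which is the one genuine missing step.
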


\begin{remark}\label{rmk.c1}
1. Assumption \eqref{c5} implies that 
\begin{equation}
\norm{(A_n^+-A_n^-)-(A^+-A^-)}_{S_1}\to 0.
\label{c8}
\end{equation}

2. Our assumptions on $A^-$, $A_n^-$ imply that $A_n^-\to A^-$ in strong resolvent 
sense; see \cite[Theorem VIII.25(a)]{ReedSimon1} and its proof. 
Combining this with \eqref{c8}, we see that also $A_n^+\to A^+$ in strong resolvent 
sense. 
\end{remark}

We will repeatedly make use of the following well known fact,
which holds true for any Schatten-von Neumann class $S_p$, $p\geq 1$,
although we will only need it for the case $p=1,2$:
\begin{proposition}\label{prp.b1}
Let $T_n$ be a sequence of bounded operators in a Hilbert space which
converges strongly to zero and let $M\in S_p$; then $\norm{T_n M}_{S_p}\to0$.
If $T_n^*$ also converges strongly to zero, then $\norm{MT_n }_{S_p}\to0$.
\end{proposition}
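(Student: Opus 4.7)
The plan is the standard density argument: reduce to finite-rank $M$, where the conclusion follows by direct computation on rank-one operators, and then pass to general $M\in S_p$ by approximation, exploiting that a strongly convergent sequence of bounded operators is automatically uniformly bounded. The second assertion about $MT_n$ will be deduced from the first by passing to adjoints.

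First I would dispose of the rank-one case. If $M=\langle\cdot,e\rangle f$ with $e,f\in\H$, then $T_nM=\langle\cdot,e\rangle T_nf$ is again rank one with
$$
\norm{T_nM}_{S_p}=\norm{e}\cdot\norm{T_nf}\to 0
$$
by strong convergence of $\{T_n\}$. The finite-rank case $M=\sum_{k=1}^N\langle\cdot,e_k\rangle f_k$ then follows by the triangle inequality. For general $M\in S_p$, given $\varepsilon>0$ I would choose a finite-rank $M_\varepsilon$ with $\norm{M-M_\varepsilon}_{S_p}<\varepsilon$; such $M_\varepsilon$ exists because finite-rank operators are dense in $S_p$, which for $p=1,2$ is immediate from the singular value decomposition. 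By the Banach--Steinhaus theorem the strongly convergent sequence $\{T_n\}$ is uniformly bounded, say $\norm{T_n}\leq C$, and by the ideal property $\norm{AB}_{S_p}\leq\norm{A}\norm{B}_{S_p}$ one obtains
$$
\norm{T_nM}_{S_p}\leq\norm{T_n(M-M_\varepsilon)}_{S_p}+\norm{T_nM_\varepsilon}_{S_p}\leq C\varepsilon+\norm{T_nM_\varepsilon}_{S_p}.
$$
The last term tends to zero by the finite-rank case, so letting $n\to\infty$ and then $\varepsilon\to 0$ yields the first claim.

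For the second statement I would pass to adjoints: using the isometry $\norm{A}_{S_p}=\norm{A^*}_{S_p}$, one has $\norm{MT_n}_{S_p}=\norm{T_n^*M^*}_{S_p}$, and since $M^*\in S_p$ and $T_n^*\to 0$ strongly by hypothesis, the first part applied to the sequence $T_n^*$ and the operator $M^*$ gives the conclusion. There is no genuine obstacle in this argument; the only mildly delicate point is that strong convergence of $T_n$ alone does \emph{not} imply strong convergence of $T_n^*$, which is precisely why the second statement requires the extra hypothesis on the adjoints.
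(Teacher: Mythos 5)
Your proof is correct. The paper itself simply cites \cite[Lemma 6.1.3]{Yafaev} for the first assertion and, just as you do, obtains the second by conjugation via $\norm{MT_n}_{S_p}=\norm{T_n^*M^*}_{S_p}$; your density argument for the first part (rank-one case, finite-rank by the triangle inequality, general $M$ via Banach--Steinhaus and the $S_p$-ideal property) is exactly the standard proof that the citation encapsulates.
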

The first part of this Proposition can be found, for example, in \cite[Lemma 6.1.3]{Yafaev},
and the second part follows immediately by conjugation, since
$\norm{MT_n}_{S_p}=\norm{T_n^*M^*}_{S_p}$.

\begin{proof}[Proof of \eqref{c6}]
Writing the representation \eqref{b34a}, we get
\begin{gather}
(g_z(A^+)-g_z(A^-))-(g_z(A^+_n)-g_z(A^-_n))
=
\int_{-\infty}^\infty dt\,\,  \wt g_z(t) t^{-1} \int_0^t ds\,\, K_n(t,s),
\label{c9}
\\
K_n(t,s)
=e^{-i(t-s)A^+}(A^+-A^-)e^{-isA^-} 
-
e^{-i(t-s)A^+_n}(A^+_n-A^-_n)e^{-isA^-_n}.
\notag
\end{gather}
We would like to use the dominated convergence theorem in order to prove that
the r.h.s. of \eqref{c9} converges to zero in the trace norm. 
First note that
$$
\norm{K_n(t,s)}_{S_1}\leq \norm{A^+-A^-}_{S_1}+\norm{A_n^+-A_n^-}_{S_1}
$$
and, by \eqref{c8}, the r.h.s. is bounded uniformly in $n$ by some constant $C$. 
This gives  an integrable bound for the integrand in the r.h.s. of
\eqref{c9}. 

Next, we claim that $\norm{K_n(t,s)}_{S_1}\to 0$ for all $t,s$. Indeed, 
we can write 
\begin{multline}
K_n(t,s)=(e^{-i(t-s)A^+}-e^{-i(t-s)A^+_n})(A^+-A^-)e^{-isA^-} 
\\
+e^{-i(t-s)A^+_n}(A^+-A^-)(e^{-isA^-} -e^{-isA^-_n})
+e^{-i(t-s)A^+_n}(A^+-A^--A^+_n+A^-_n)e^{-isA^-_n}.
\label{c10}
\end{multline}
The last term in the r.h.s. converges to zero by \eqref{c8}. 
Next, since $A_n^\pm\to A^\pm$ in strong resolvent sense (see Remark~\ref{rmk.c1}), 
by \cite[Theorem~VIII.21]{ReedSimon1} we have strong convergence 
$e^{itA_n^\pm}\to e^{itA^\pm}$ as $n\to\infty$. 
Thus, by Proposition~\ref{prp.b1}, the first two terms in the r.h.s. of \eqref{c10} 
converge to zero in the trace norm. By dominated convergence, this proves \eqref{c6}.
\end{proof}

The proof of \eqref{c7} requires a little more work. 
First we need an abstract lemma which ensures the strong convergence of resolvents. 
I am indebted to Nikolai Filonov for providing the proof of this lemma. 

Let $D$  be a closed densely defined operator in a Hilbert space such that $D^*$ is also densely 
defined. 
For each $n$, let $D_n$ be a closed densely defined operator such that $D_n^*$ is also 
densely defined and $\Dom D\subset \Dom D_n$ and $\Dom D^*\subset \Dom D_n^*$.
\begin{lemma}\label{lma.c1}
Assume the above conditions and assume that for all $f\in\Dom D$ one has
$\norm{D_nf-Df}\to0$ and for all $f\in\Dom D^*$ one has $\norm{D_n^*f-D^*f}\to0$
as $n\to \infty$.
Then for all $z\in\C\setminus[0,\infty)$, 
one has the strong convergence
\begin{equation}
(D_n^*D_n-z)^{-1}\to (D^*D-z)^{-1},
\quad n\to \infty.
\label{c1}
\end{equation}
\end{lemma}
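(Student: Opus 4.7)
The plan is to prove the convergence for $z=-1$ only and then appeal to the standard fact that strong resolvent convergence of self-adjoint operators at a single point of the resolvent set extends to every $z\in\C\setminus[0,\infty)$ via the first resolvent identity and uniform boundedness of $\|(H_n-z)^{-1}\|$. Fix $h\in\H$ and set $H=D^*D$, $H_n=D_n^*D_n$, $u=(H+1)^{-1}h$, $u_n=(H_n+1)^{-1}h$. Since $u_n\in\Dom H_n$ and $D_nu_n\in\Dom D_n^*$, one has
\[
\|D_nu_n\|^2+\|u_n\|^2=\langle (H_n+1)u_n,u_n\rangle=\langle h,u_n\rangle,
\]
which combined with $\|u_n\|\le\|h\|$ yields the uniform bound $\|D_nu_n\|\le\|h\|$. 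Passing to a subsequence, which I relabel, I may assume $u_n\to u_*$ and $D_nu_n\to v$ weakly.

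The crux is to identify $u_*$ with $u$. For any $g\in\Dom D^*$, the hypothesis $D_n^*g\to D^*g$ gives $\langle D_nu_n,g\rangle=\langle u_n,D_n^*g\rangle\to\langle u_*,D^*g\rangle$, and also $\langle D_nu_n,g\rangle\to\langle v,g\rangle$. Since $D$ is closed with densely defined adjoint, $D^{**}=D$, and the equality $\langle u_*,D^*g\rangle=\langle v,g\rangle$ for every $g\in\Dom D^*$ forces $u_*\in\Dom D$ with $Du_*=v$. Next, for any test vector $\phi\in\Dom D\subset\Dom D_n$, the quadratic-form version of the resolvent equation,
\[
\langle D_nu_n,D_n\phi\rangle+\langle u_n,\phi\rangle=\langle h,\phi\rangle,
\]
holds since $u_n\in\Dom H_n$ and $\phi$ lies in the form domain of $H_n+1$. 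Using $D_n\phi\to D\phi$ strongly together with the weak convergence of $u_n$ and $D_nu_n$, passage to the limit gives $\langle Du_*,D\phi\rangle+\langle u_*,\phi\rangle=\langle h,\phi\rangle$ for every $\phi\in\Dom D$. This says exactly that $Du_*\in\Dom D^*$ and $(H+1)u_*=h$, hence $u_*=u$. Uniqueness of the weak limit upgrades the subsequential weak convergence to weak convergence of the full sequence $u_n\to u$.

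To upgrade to strong convergence, pass to the limit in the scalar identity $\|D_nu_n\|^2+\|u_n\|^2=\langle h,u_n\rangle$: its right-hand side tends to $\langle h,u\rangle=\|Du\|^2+\|u\|^2$ (by the analogous identity with $u_n$ replaced by $u$), while lower semicontinuity of the norms under weak convergence gives $\|u\|^2\le\liminf\|u_n\|^2$ and $\|Du\|^2\le\liminf\|D_nu_n\|^2$. These two together force $\|u_n\|\to\|u\|$, which combined with weak convergence yields $u_n\to u$ in norm. The main obstacle, and the only step that uses the full strength of the hypotheses, is the identification of $u_*$: one cannot pair the operator identity $(H_n+1)u_n=h$ with a test vector $\phi\in\Dom D$, since $\phi$ need not lie in $\Dom H_n$. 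The cure is to switch to the quadratic-form identity, for which the inclusion $\Dom D\subset\Dom D_n$ of form domains provided by the hypotheses is exactly what makes $\phi$ a legitimate test vector.
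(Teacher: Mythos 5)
Your proof is correct and follows essentially the same route as the paper's: extract a weakly convergent subsequence $(u_n, D_nu_n)$, use the adjoint hypothesis to place the weak limit in $\Dom D$, then use the quadratic-form identity with test vectors from $\Dom D \subset \Dom D_n$ to pin down the limit as $(H+1)^{-1}h$. The only expository difference is that the paper invokes a Reed--Simon exercise to reduce strong to weak convergence, whereas you supply the upgrade directly via norm convergence and weak lower semicontinuity, and the paper works at arbitrary $z$ while you fix $z=-1$ and extend afterward by standard resolvent-identity arguments.
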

\begin{proof}
1. 
By \cite[Chapter VIII, Problem 20]{ReedSimon1}, 
it suffices to prove the weak convergence in \eqref{c1}.

2.
Fix  $z\in\C\setminus[0,\infty)$, 
$f\in \H$ and denote $\f_n=(D_n^*D_n-z)^{-1}f$. We need to prove
that $\f_n\to \f$ weakly, where $\f=(D^*D-z)^{-1}f$. 
First note that 
\begin{equation}
\norm{\varphi_n}\leq \frac{\norm{f}}{\dist(z,[0,\infty))}.
\label{c1a}
\end{equation}
Next, we have 
$$
\norm{D_n \f_n}^2- z \norm{\f_n}^2
=
(f,\f_n)
=(f,(D_n^*D_n-z)^{-1}f), 
$$
and so 
\begin{equation}
\norm{D_n \varphi_n}^2\leq \abs{z}\norm{\varphi_n}^2+\abs{(f,(D_n^*D_n-z)^{-1}f)}\leq C(z)\norm{f}^2.
\label{c2}
\end{equation}
By \eqref{c1a}, \eqref{c2} and  the weak compactness of the unit ball in a Hilbert space, from 
the sequence $\f_n$ one can choose a subsequence $\f_{n_k}$
such that $ \f_{n_k}\to \tilde \f$ and $D_{n_k} \f_{n_k}\to \psi$ 
weakly for some elements $\tilde \f$, $\psi$ in $\H$. 

3.
Let us prove that $\tilde \f\in \Dom D$ and $\psi=D \tilde \f$. 
For any $\chi\in \Dom D^*$, we have 
$$
(D_{n_k} \f_{n_k},\chi)\to(\psi,\chi),
$$
and so 
$$
( \f_{n_k},D_{n_k}^*\chi)\to(\psi,\chi).
$$
Since $\norm{D_{n_k}^*\chi-D^*\chi}\to0$ by our assumptions, we get 
\begin{equation}
(\wt \f, D^*\chi)=(\psi, \chi)
\label{c3}
\end{equation}
for all $\chi\in \Dom D^*$;
it follows that $\tilde \f\in \Dom D$ and $\psi=D \tilde \f$. 

4.
Next, we have for any $\chi\in\Dom D$:
\begin{equation}
(D_n\f_n, D_n\chi)-z(\f_n,\chi)=(f,\chi).
\label{c4}
\end{equation}
By the previous step, $\f_{n_k}\to \tilde \f$ and $D_{n_k} \f_{n_k}\to \psi$
weakly, and by the hypothesis, $D_n\chi\to D\chi$ strongly. 
Passing to the limit in \eqref{c4} over the subsequence $n_k$, 
we get
$$
(\psi, D\chi)-z(\tilde\f,\chi)=(f,\chi)
$$
for all $\chi\in \Dom D$. 
It follows that $\psi\in \Dom D^*$ and $D^*\psi-z\tilde\f=f$. 
Recalling that $\psi=D\tilde \f$, we get that $\tilde \f\in \Dom (D^*D)$ and 
$(D^*D-z)\tilde \f=f$. 
Thus, we have proven that $\tilde \f=\f$.

5.
We have proven weak convergence $\f_n\to \f$
over a subsequence $n_k$. But  we could have started 
from an arbitrary subsequence of $\f_n$ and proven that it has 
a subsubsequence which weakly converges to $\f$. 
This proves that actually the whole sequence $\f_n$ weakly converges to $\f$. 
\end{proof}

\begin{proof}[Proof of \eqref{c7}]

1. By  Lemma~\ref{lma.b5} and the uniform boundedness of the integrals 
$\int \norm{B_n'(t)}_{S_1}dt$, one can choose $a<-1$ such that for all 
$z\in(-\infty,a)$, the estimates
$$
\norm{R_0(z)^{1/2} B' R_0(z)^{1/2}}\leq 1/2,\quad
\sup_n \norm{R_0(z)^{1/2} B_n' R_0(z)^{1/2}}\leq 1/2
$$
hold true. Then, as in the proof of Lemma~\ref{l.b1},
we get \eqref{b32}, \eqref{b33}, and also 
\begin{align*}
R_n(z)&=M_n(z)R_0(z)^{1/2} =R_0(z)^{1/2}M_n(z)^*,
\\
\wt R_n(z)&=\wt M_n(z) R_0(z)^{1/2} =R_0(z)^{1/2}\wt M_n(z)^*
\end{align*}
with $\norm{M_n(z)}^2\leq 2$ and $\norm{\wt M_n(z)}^2\leq 2$. 
In what follows, we fix $z\in(-\infty,a)$ as above and suppress the 
dependance of $z$ in our notation for brevity.

2. Note that by Lemma~\ref{lma.c1}, we have the strong convergence 
of resolvents $R_n\to R$, $\wt R_n\to \wt R$. Moreover, we claim that 
for the operators $M_n$, $\wt M_n$ we have the strong convergence
$M_n\to M$, 
$\wt M_n\to \wt M$. 
Indeed, for all $f\in\Dom(H_0-z)^{1/2}$ one has 
$$
M_n f=R_n(H_0-z)^{1/2}f\to R(H_0-z)^{1/2}f=Mf.
$$
Since the norms of $M_n$ are uniformly bounded, we get the strong 
convergence $M_n\to M$. 
In the same way, we obtain the strong convergence $\wt M_n\to \wt M$.

3. Using the resolvent identity, we obtain
\begin{multline}
(R_n-\wt R_n)-(R-\wt R)
=
R_n  B_n' \wt R_n-R B' \wt R
\\
=
R_n( B_n' -B')\wt R_n+(R_n-R)B'\wt R_n
+RB'(\wt R_n-\wt R).
\label{b21}
\end{multline}
Let us consider separately each of the three terms in the r.h.s. of \eqref{b21}.

4. For the first term, we have 
$$
R_n(B_n' -B')\wt R_n
=
M_n(R_0^{1/2}(B_n'-B')R_0^{1/2})\wt M_n^*
$$
and the r.h.s. converges to zero in the trace norm by Lemma~\ref{lma.b5} 
and assumption \eqref{c5}.

5. Consider the second term  in the r.h.s. of \eqref{b21}. We have:
$$
(R_n-R)B'\wt R_n
=
(M_n-M)(R_0^{1/2}B'R_0^{1/2})\wt M_n^*.
$$
Since $R_0^{1/2}B'R_0^{1/2}$ is a trace class operator, and  
$M_n\to M$ strongly as $n\to\infty$,  
by Proposition~\ref{prp.b1}, we obtain that the r.h.s. converges to zero in the trace norm.

6. Finally, the third term in the r.h.s. of \eqref{b21} can be considered
similarly to the second one:
$$
[RB'(\wt R_n-\wt R)]^*=(\wt R_n-\wt R)B'R=(\wt M_n-\wt M)(R_0^{1/2}B'R_0^{1/2})M^*,
$$
and the r.h.s. goes to zero in the trace norm as $n\to\infty$. 
\end{proof}

\subsection{Constructing the approximating family $A_n(t)$}
We will approximate $A(t)$ in two steps. First, we approximate an arbitrary 
finite rank family $A(t)$ by the ones with compactly supported $A'(t)$.
Next, we approximate an arbitrary family by finite rank families. 

%%%%%%%%%%%%%%%
\begin{lemma}\label{lma.c2}
%%%%%%%%%%%%%%%
Let $\dim\H<\infty$. Then Proposition~\ref{p1} holds true. 
\end{lemma}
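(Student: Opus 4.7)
The strategy is an approximation argument that reduces the general finite-dimensional case to Lemma~\ref{lma.b1}. Given $A(t)=A^-+B(t)$ with $\int\norm{B'(t)}_{S_1}\,dt<\infty$, I define approximants $A_n(t)=A^-+B_n(t)$ by
$$
B_n'(t)=\chi_{[-n,n]}(t)\,B'(t),\qquad B_n(t)=\int_{-\infty}^t B_n'(s)\,ds.
$$
By construction $B_n'$ is supported in $[-n,n]$, so $A_n(t)$ equals the constant $A_n^-:=A^-$ for $t<-n$ and the constant $A_n^+:=A^-+\int_{-n}^n B'(s)\,ds$ for $t>n$. In particular $B_n(-\infty)=0$, so the normalization hypothesis is preserved, and each $A_n$ satisfies the assumptions of Lemma~\ref{lma.b1}. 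That Lemma then delivers
$$
\Tr\bigl((\wt H_n-z)^{-1}-(H_n-z)^{-1}\bigr)=\frac1{2z}\tr\bigl(g_z(A_n^+)-g_z(A_n^-)\bigr)
$$
for every $n$ and every $z\in\C\setminus[0,\infty)$.

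Next, I apply Lemma~\ref{l.c1} to the sequence $\{A_n\}$. The hypothesis on $A_n^-$ is trivial since $A_n^-=A^-$, while \eqref{c5} follows from dominated convergence:
$$
\int_{-\infty}^\infty \norm{B_n'(t)-B'(t)}_{S_1}\,dt=\int_{\abs{t}>n}\norm{B'(t)}_{S_1}\,dt\to 0.
$$
Hence, for $z\in(-\infty,-1)$ with $\abs{z}$ sufficiently large, conclusions \eqref{c6} and \eqref{c7} yield trace-norm convergence of both $g_z(A_n^+)-g_z(A_n^-)$ and $(H_n-z)^{-1}-(\wt H_n-z)^{-1}$ to their counterparts for $A$. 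Since the trace is continuous in the trace norm, passing $n\to\infty$ in the identity above establishes \eqref{a4} for this range of $z$.

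To finish, I extend \eqref{a4} from a real ray to all $z\in\C\setminus[0,\infty)$ by analytic continuation. Both sides are analytic in $z$ on $\C\setminus[0,\infty)$ --- on the left by standard resolvent calculus together with the trace class inclusion provided by Lemma~\ref{l.b1}, and on the right because $g_z(s)=s/\sqrt{s^2-z}$ depends analytically on $z$ off the spectrum of $(A^\pm)^2\subset[0,\infty)$ --- so agreement on a real interval forces agreement throughout the connected domain. The main step is choosing the approximating sequence to simultaneously fit the hypotheses of both Lemma~\ref{l.c1} and Lemma~\ref{lma.b1}, i.e.\ preserving $B_n^-=0$ and $\Dom A_n^-=\Dom A^-$ while achieving trace-norm convergence of $B_n'\to B'$; the explicit cutoff above handles all of this at once, and no further obstacle arises since trace class issues are automatic in finite dimensions.
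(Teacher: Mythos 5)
Your proof is correct and follows essentially the same approach as the paper: reduce to the compactly-supported-derivative case handled by Lemma~\ref{lma.b1}, then pass to the limit via Lemma~\ref{l.c1}, using analyticity in $z$ to get the full range. The only (inconsequential) difference is the cutoff: you truncate $B'$ sharply, so $A_n^+$ only converges to $A^+$, whereas the paper modifies $A(t)$ by piecewise linear interpolation so that $A_n^\pm=A^\pm$ exactly; both variants satisfy the hypotheses of Lemma~\ref{l.c1}, so the argument goes through identically.
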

\begin{proof}
By analyticity in $z$, it suffices to prove \eqref{a4} for $z\in(-\infty,-1)$ with 
sufficiently large $\abs{z}$. Then we can use Lemma~\ref{l.c1}.

For a given family $A(t)$, let us construct a sequence of families $A_n(t)$ 
such that for each $n$ and all sufficiently large $\pm t>0$, we have
$A_n(t)=A^\pm$. This is not difficult to do. 
Indeed, let $A_n(t)$ be such that $A_n(t)=A(t)$ for $t\in[-n,n]$, 
$A_n(t)=A^-$ for $t\leq -n-1$, $A_n(t)=A^+$ for $t\geq n+1$, 
and $A_n(t)$ is obtained by linear interpolation on $[-n-1,-n]$
and $[n,n+1]$. Explicitly, 
\begin{gather*}
A_n(t)=A^-+(t+n+1)(A(-n)-A^-), \quad t\in[-n-1,-n],
\\
A_n(t)=(t-n)A^++(n+1-t)A(n), \quad t\in [n,n+1].
\end{gather*}
Then we have 
\begin{multline*}
\int_{-\infty}^\infty \norm{A_n'(t)-A'(t)}_{S_1}dt
\leq
\int_n^\infty (\norm{A_n'(t)}_{S_1}+\norm{A'(t)}_{S_1})dt
\\
+
\int_{-\infty}^{-n} (\norm{A_n'(t)}_{S_1}+\norm{A'(t)}_{S_1})dt
\to 0
\end{multline*}
as $n\to\infty$. 
By Lemma~\ref{lma.b1}, the identity \eqref{a4} holds true 
for the families $A_n(t)$. By Lemma~\ref{l.c1}, we can pass to the limit
as $n\to\infty$ in both sides of \eqref{a4}, which yields the required result. 
\end{proof}

Next, we approximate an arbitrary family $A(t)$ by finite rank families.

\begin{lemma}\label{lma.b4}
There exists a sequence of finite rank orthogonal projections $P_n$ in $\H$
such that:

(i) $P_n\to I$ strongly as $n\to\infty$;
 
(ii) $\Ran P_n\subset \Dom A^-$ for all $n$;

(iii) for all $f\in \Dom (A^-)$, one has 
$\norm{P_nA^- P_nf-A^-f}\to 0$ as $n\to\infty$.
\end{lemma}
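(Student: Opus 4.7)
The plan is to construct $P_n$ by combining a spectral cutoff of $A^-$ with a finite-rank approximation via a diagonal procedure. First, using the spectral theorem, I would introduce the bounded spectral projections $E_n := E_{A^-}([-n,n])$; these satisfy $\Ran E_n \subset \Dom A^-$, converge strongly to $I$, and yield the bounded operator $A^- E_n$ of norm at most $n$ with $A^- E_n f = E_n A^- f \to A^- f$ for every $f \in \Dom A^-$.

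Since $\H$ is separable, so is $\Ran E_n$; picking an orthonormal basis of the latter and projecting onto its first $k$ vectors yields finite-rank projections $Q_n^{(k)}$ with $Q_n^{(k)} \leq E_n$ and $Q_n^{(k)} \to E_n$ strongly as $k \to \infty$. Fixing a countable subset $\{g_j\}_{j \geq 1}$ of $\Dom A^-$ that is dense in the graph norm, I would then perform a diagonal selection: choose $k(n)$ large enough so that $P_n := Q_n^{(k(n))}$ satisfies, for every $j \leq n$, the estimates $n\, \|(E_n - P_n) g_j\| \leq 1/n$ and $\|(P_n - E_n) E_n A^- g_j\| \leq 1/n$.

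Properties (ii) and (i) then follow in the standard way: (ii) is immediate from $P_n \leq E_n$, and (i) follows from $P_n g_j \to g_j$ on the dense set combined with the uniform bound $\|P_n\| \leq 1$. For (iii), using $P_n E_n = P_n$ and the commutation $A^- E_n = E_n A^-$ on $\Dom A^-$ one arrives at the identity
\[
  P_n A^- P_n f - A^- f = -P_n (A^- E_n)(E_n - P_n) f + (P_n - I) A^- f,
\]
so that $\|P_n A^- P_n f - A^- f\| \leq n \|(E_n - P_n) f\| + \|(P_n - I) A^- f\|$, and both contributions vanish for $f \in \{g_j\}$ by construction.

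The main obstacle is extending property (iii) from the dense subset to arbitrary $f \in \Dom A^-$: a naive triangle inequality produces a divergent term of the form $n \|f - g_j\|$. I would resolve this by refining the construction of $P_n$, taking it to be a direct sum $\sum_k F_{n,k}$ of finite-rank projections $F_{n,k} \leq E_{A^-}(I_{n,k})$ subordinated to a fine partition $[-n,n] = \bigsqcup_k I_{n,k}$ of mesh $1/n$. The mutual orthogonality of the spectral projections $E_{A^-}(I_{n,k})$ then allows one to compare $P_n A^- P_n f$ with the approximate diagonal action $\sum_k \lambda_{n,k} F_{n,k} f$, yielding the sharper bound
\[
  \|P_n A^- P_n f - A^- f\| \leq \|f\|/n + n \|(E_n - P_n) f\| + \|(I - E_n) A^- f\|,
\]
whose first and third terms vanish as $n \to \infty$ for every $f \in \Dom A^-$, while extension of the control on the middle term from the dense subset to all of $\Dom A^-$ follows from a uniform-boundedness argument for $P_n A^- P_n$ viewed as maps $(\Dom A^-, \|\cdot\|_{\mathrm{graph}}) \to \H$.
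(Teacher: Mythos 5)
Your refined construction --- finite-rank projections $F_{n,k}$ subordinate to mutually orthogonal spectral intervals of $A^-$, summed to give $P_n$ --- is correct, and it rests on the same mechanism the paper uses: decompose the spectrum of $A^-$ into orthogonal intervals and take finite-rank approximants inside each spectral subspace. But your route is more involved than necessary. The paper works with the \emph{fixed} unit-mesh partition: it sets $E_k=E_{A^-}((k,k+1])$ for $k\in\Z$, picks finite-rank projections $Q_j^{(k)}\leq E_k$ with $Q_j^{(k)}\to E_k$ strongly as $j\to\infty$, and defines $P_n=\sum_{|k|\leq n}Q_n^{(k)}$. With this fixed mesh there is no diagonal-action intermediary $\sum_k\lambda_{n,k}F_{n,k}$, no $\norm{f}/n$ error term, and no separate uniform-boundedness step: since $\norm{A^-E_k}\leq|k|+1$, the membership $f\in\Dom A^-$ translates into $\sum_k(k^2+1)\norm{E_k f}^2<\infty$, so the graph-norm summability controls the spectral tail $\sum_{|k|\geq N}E_k f$ uniformly in $n$, while convergence on each finite block $\sum_{|k|<N}E_k f$ is immediate from $Q_n^{(k)}\to E_k$ and the boundedness of $A^-E_k$. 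Your uniform boundedness of $P_n A^- P_n$ in graph norm is the right observation and plays the same role as this tail estimate, but the fixed-mesh version makes the density extension nearly automatic. (Your first attempt, with a single finite-rank projection under $E_{A^-}([-n,n])$, you correctly diagnose as failing: the factor $n$ in $n\norm{(E_n-P_n)f}$ cannot be beaten without the orthogonal splitting.)
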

\begin{proof}
For any $k\in\Z$,
let $E_k$ be the spectral projection of the operator $A^-$ associated with the
interval $(k,k+1]$, and let $Q_{j}^{(k)}$ be a sequence of finite 
rank projections such that 
$\Ran Q_{j}^{(k)}\subset \Ran E_k$ and $Q_{j}^{(k)}\to E_k$ 
strongly as $j\to\infty$.
Take $P_n=\sum_{k=-n}^n Q_{n}^{(k)}$. 
Then (i), (ii) are obvious. 
Let us prove (iii). If $f\in \Dom A^-$, then 
$f=\sum_{k=-\infty}^\infty f_k$, where $f_k=E_k f$ and 
$\sum_{k=-\infty}^\infty (k^2+1)\norm{f_k}^2<\infty$. 
Given such $f$ and $\e>0$, we can choose $N$ 
sufficiently large so that
$\sum_{\abs{k}\geq N} (k^2+1)\norm{f_k}^2<\e^2$;
denote $g_1= \sum_{\abs{k}< N}f_k$ and 
$g_2= \sum_{\abs{k}\geq N}f_k$.
Then $\norm{P_n A^- P_n g_2}\leq \e$ and  $\norm{A^-g_2}\leq \e$.
On the other hand, it is easy to see that 
$\norm{P_nA^- P_ng_1-A^-g_1}\to 0$ as $n\to\infty$.
This proves (iii). 
\end{proof}

\begin{proof}[Proof of Proposition~\ref{p1}]
1. Let $A_n(t)=P_n A(t) P_n$, $B_n(t)=P_n B(t) P_n$, 
where $P_n$ are as constructed in Lemma~\ref{lma.b4}. 
We claim that 
$$
\int_{-\infty}^\infty \norm{B_n'(t)-B'(t)}_{S_1}dt\to 0
\quad \text{as $n\to\infty$. }
$$
Indeed, let us apply the dominated convergence theorem. 
First, we have 
$$
\norm{B_n'(t)-B'(t)}_{S_1}
\leq 
\norm{B_n'(t)}_{S_1}+\norm{B'(t)}_{S_1}
\leq 
2\norm{B'(t)}_{S_1}.
$$
Next, for all $t\in \R$, we have
$$
B_n'(t)-B'(t)=(P_n-I)B'(t)P_n+B'(t)(P_n-I),
$$
and the right hand side converges to zero by Proposition~\ref{prp.b1},
since $P_n\to I$ and $B'(t)\in S_1$. 

2. By Lemma~\ref{lma.c2}, the identity \eqref{a4} holds true for the families
$A_n(t)$. By Lemma~\ref{l.c1}, we can pass to the limit as $n\to\infty$ in both 
sides of \eqref{a4} when $z\in(-\infty,-1)$, $\abs{z}$ large. 
By analyticity, this yields the required result for all $z$. 
\end{proof}


\begin{thebibliography}{11}


\bibitem{AvronSeilerSimon}
J. Avron, R. Seiler\ and\ B. Simon, 
\emph{The index of a pair of projections,}
J. Funct. Anal. {\bf 120} (1994), no.~1, 220--237.

\bibitem{BYa}
M. Sh. Birman\ and\ D. R. Yafaev, 
\emph{The spectral shift function. The papers of M. G. Kre\u\i n and their further development. (Russian)}
Algebra i Analiz {\bf 4} (1992), no. 5, 1--44; translation in St. Petersburg Math. J. {\bf 4} (1993), no.~5, 833--870.

\bibitem{BGS}
D.~Boll\'e, F.~Gesztesy,  H.~Grosse,  W.~Schweiger, B.~Simon,
\emph{Witten index, axial anomaly, and Kre\u\i n's spectral shift function 
in supersymmetric quantum mechanics,}
J. Math. Phys. {\bf 28} (1987), no.~7, 1512--1525

\bibitem{Callias}
C.~Callias,
\emph{Axial Anomalies and Index Theorems on Open Spaces,}
Commun. Math. Phys. \textbf{62} (1978), 213--234.


\bibitem{Krein}
 M.~G.~Krein, \emph{On the trace formula in perturbation theory (Russian)}
Mat. Sb. \textbf{33 (75)}, no. 3 (1953), 597--626.




\bibitem{MOlmedilla}
L. Mart\'\i nez Alonso\ and\ E. Olmedilla, 
\emph{Trace identities in the inverse scattering transform method associated with matrix Schr\"odinger operators,}
J. Math. Phys. {\bf 23} (1982), no.~11, 2116--2121


\bibitem{ReedSimon1}
M.~Reed\ and \ B.~Simon,
\emph{Methods of modern mathematical physics, I. Functional analysis.}
Academic Press, 1980. 


\bibitem{RobbinS}
J. Robbin\ and\ D. Salamon, 
\emph{The spectral flow and the Maslov index,}
Bull. London Math. Soc. {\bf 27} (1995), no.~1, 1--33.

\bibitem{Yafaev}
D.~R.~Yafaev, \emph{Mathematical scattering theory. General theory.} 
American Mathematical Society, Providence, RI, 1992.

\end{thebibliography}
\end{document}